\def\C{\mathbb C}
\def\N{\mathbb N}
\def\Z{\mathbb Z}
\def\R{\mathbb R}
\def\Q{\mathbb Q}
\def\OO{{\mathcal O}}
\def\deg{\operatorname{deg}}
\newtheorem{theorem}{Theorem}[section]
\newtheorem{lemma}[theorem]{Lemma}
\theoremstyle{remark}
\newtheorem*{ack}{Acknowledgment}
\newtheorem*{rem}{Remark}
\newtheorem*{case1}{Case 1}
\newtheorem*{case2}{Case 2}
\newtheorem{remark}[theorem]{Remark}
\numberwithin{equation}{section}
\begin{document}
\title{Zeros, growth and Taylor coefficients of entire solutions of linear $q$-difference equations}
\author{Walter Bergweiler}
\date{}
\maketitle
\centerline{\emph{Dedicated to the memory of Professor Walter K.\ Hayman}}
\begin{abstract}
We consider transcendental entire solutions of linear $q$-difference equations with polynomial
coefficients and determine the asymptotic behavior of their Taylor coefficients.
We use this to show that under 
a suitable hypothesis on the associated Newton-Puiseux diagram their zeros
are asymptotic to finitely many geometric progressions.
We also sharpen previous results on the growth rate of entire solutions.

\medskip 

\noindent
\emph{Keywords}: difference equation, $q$-difference equation, entire function, maximum modulus,
growth, zeros, Taylor series, Newton-Puiseux diagram.

\medskip 

\noindent
2020 \emph{Mathematics Subject Classification}: Primary 39A13; Secondary 39A45, 39A06, 30D05, 30D15.
\end{abstract}

\section{Introduction and main results} \label{intro}
An equation of the form
\begin{equation}
\sum_{j=0}^m a_j(z)f(q^jz)=b(z)\label{101}
\end{equation}
is called a linear $q$-difference equation.
Here we assume that $q\in\C$ with $0<|q|<1$ and that $b$ and the $a_{j}$
are polynomials.

The study of such equations has a long history -- already Adams' survey~\cite{Adams1931} from 1931
has an extensive bibliography. The subject continues to be an active area of research,
see~\cite{Annaby2012,Barnett2007,Cao2019,Chiang2018,Dreyfus2015,Ramis2013} for a (very incomplete)
 sample of more recent work.

The asymptotics of the Taylor coefficients and the zeros of solutions of this equation were
studied in~\cite{Bergweiler2003} under a hypothesis on the associated Newton-Puiseux diagram.
In particular, it was shown that under this hypothesis the zeros are asymptotic to certain
geometric progressions.
We will show that this holds in much more general situations. Moreover, the asymptotics of the
coefficients are determined without additional hypotheses on the Newton-Puiseux diagram.
We also refine previous estimates of the growth of solutions.

We recall the definition of the Newton-Puiseux diagram $P$ associated to~\eqref{101}. 
Let $d(j)$ denote the degree of~$a_j$. Then $P$ is defined as the convex hull of
\begin{equation} \label{1a}
\bigcup_{j=0}^m \left\{(x,y) \in\R^2\colon x \geq j \  \text{and} \  y \leq d(j) \right\}.
\end{equation}
Let $(j_{k}, d({j_{k}}))$ be the vertices of $P$, with $k\in\{0,\dots, K\}$ and
\begin{equation} \label{1c}
0=j_{0}<j_{1}<\ldots<j_{K}\le m.
\end{equation}
If~\eqref{101} has a transcendental entire solution,
then there exists $j\in\{1,\dots,m\}$ such that $d(j)>d(0)$; see \cite[Theorem 1.1]{Bergweiler2002}.
This implies that $K\geq 1$. For $k\in\{1,\dots,K\}$ we define 
\begin{equation} \label{1c1}
\sigma_{k}:=\frac{d({{j}_{k}})-d({{j}_{k-1}})}{j_{k}-j_{k-1}}.
\end{equation}
Then $\sigma_{1}>\sigma_{2}>\dots>\sigma_{K}>0$. 
The $\sigma_{k}$ are the slopes of the segments which form the boundary of $P$.

If $f$ is a transcendental entire solution of (\ref{101}), then
(\cite[Theorem 1.2]{Bergweiler2002}; see also~\cite[Theorem 4.8]{Ramis1992})
there exists $k\in\{1,\dots K\}$ such that the maximum modulus
\begin{equation} \label{103}
M(r,f):= \max_{|z|=r} |f(z)|
\end{equation}
satisfies
\begin{equation} \label{104}
\log M(r,f)\sim \frac{\sigma_k}{-2\log |q|}\left(\log r\right)^2
\end{equation}
as $r\to\infty$.
The condition posed in~\cite{Bergweiler2003} was that
the segment of the boundary of $P$ whose slope is $\sigma_k$ contains no
point $(j,d(j))$ except for its endpoints $(j_{k-1},d(j_{k-1}))$ and $(j_{k},d(j_{k}))$.

Consider for example the case $m=4$, $\deg a_0=0$,
$\deg a_1=1$, $\deg a_2=2$, $\deg a_3=1$ and $\deg a_4=3$.
The corresponding Newton-Puiseux diagram 
is shown in Figure~\ref{np-diagram2}, with the points $(j,d(j))$ marked.
Here we have $K=2$, $j_1=2$, $j_2=4$, $\sigma_1=1$ and $\sigma_2=1/2$.
The hypothesis on the segment of $\partial P$ corresponding to $\sigma_k$ that
was posed in~\cite{Bergweiler2003} is satisfied for $k=2$, but not for $k=1$.
\begin{figure}[!htb]
\centering
\begin{tikzpicture}[scale=1.2,>=latex](-0.1,-0.1)(5.1,3.1)
\filldraw[gray!10] (0,0) -- (2,2) -- (4,3) -- (5,3) -- (5,-0.7) -- (0,-0.7);
\draw[thick,-] (0,0) -- (2,2) -- (4,3);
\draw[->] (-0.3,0) -- (5.0,0);
\draw[->] (0,-0.7) -- (0,3.3);
\foreach \x in {1,...,4}
   {
    \draw (\x,-2pt) -- (\x,2pt);
    \node[below] at (\x,-2pt) {$\x$};
   }
\foreach \y in {1,...,3}
   {
    \draw (-2pt,\y) -- (2pt,\y);
    \node[left] at (-2pt,\y) {$\y$};
   }
\filldraw[black] (0,0) circle (0.05);
\filldraw[black] (1,1) circle (0.05);
\filldraw[black] (2,2) circle (0.05);
\filldraw[black] (3,1) circle (0.05);
\filldraw[black] (4,3) circle (0.05);
\end{tikzpicture}
\caption{A Newton-Puiseux diagram.}
\label{np-diagram2}
\end{figure}
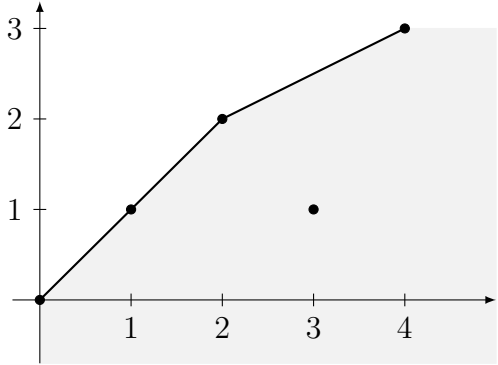

Let now
\begin{equation} \label{105}
f(z)=\sum_{n=0}^\infty \alpha_n z^n
\end{equation}
be a transcendental entire solution of~\eqref{101}. Then
there exists $k\in\{1,\dots,K\}$ such that the growth of $f$ is 
given by~\eqref{104}. With this value of $k$ we put
\begin{equation} \label{MN}
M:=d(j_k)-d(j_{k-1}) 
\quad\text{and}\quad 
N:= j_k-j_{k-1}
\quad\text{so that}\quad 
\sigma_k=\frac{M}{N}.
\end{equation}
We also put
\begin{equation} \label{defrho}
\rho:= q^{1/(2M)},
\end{equation}
for some fixed branch of the root.

It was shown in~\cite[Theorem~2]{Bergweiler2003} that
if the above condition on the segment of $\partial P$ corresponding to $\sigma_k$ is satisfied,
then there exist $\xi\in\C\setminus\{0\}$ and
$(\eta_0,\eta_1,\dots,\eta_{M-1})\in\C^M\setminus\{(0,0,\dots,0)\}$
such that for $r\in\{0,1,\dots,M-1\}$ we have
\begin{equation} \label{106}
\alpha_n= \left(\eta_r+\OO\!\left(|\rho|^{2n}\right)\right) \rho^{N n^2}\xi^n
\end{equation}
as $n\to \infty$ while satisfying $n\equiv r\pmod M$,
and the set of zeros of $f$ can be written in the form
\begin{equation} \label{106a}
\left\{z_{n,\nu}\colon\nu\in\{1,2,\dots,M\},n\in\N\right\}
\end{equation}
such that for each $\nu$ there exists $A_\nu\in\C\setminus\{0\}$
with $z_{n,\nu}\sim A_\nu q^{-N n}$ as $n\to\infty$.
More precisely, 
\begin{equation}\label{107b}
z_{n,\nu}= A_\nu q^{-N n}\left(1+\OO\!\left(|q|^{n/M}\right)\right).
\end{equation}

Let now $I_k$ be the set of all $j\in\{0,\dots,m\}$ for which $(j,d(j))$ lies on the 
line through $(j_{k-1},d(j_{k-1}))$ and $(j_{k},d(j_{k}))$.
It follows from the definition of the Newton-Puiseux diagram that 
\begin{equation} \label{301a}
\left\{j_{k-1},j_k\right\}\subset I_k \subset \left\{j_{k-1},j_{k-1}+1,\dots,j_k-1,j_k\right\} .
\end{equation}
The hypothesis posed in \cite{Bergweiler2003} then takes the form $I_k=\{j_{k-1},j_k\}$, but we shall
drop this condition now.  Write 
\begin{equation}\label{a_j}
a_j(z)=\sum_{i=0}^{d(j)}a_{j,i} z^i 
\end{equation}
and put
\begin{equation} \label{defc}
c_{j,i}:=a_{j,i}\rho^{N i^2}q^{-ji} =a_{j,i}\rho^{N i^2 -2M ji} .
\end{equation}
Then $a_{j,d(j)}\neq 0$ and hence $c_{j,d(j)}\neq 0$ since $d(j)=\deg a_j$. 
The polynomial
\begin{equation}\label{p_k}
P_k(z):=\sum_{j\in I_k} c_{j,d(j)} z^{d(j)-d(j_{k-1})}
\end{equation}
is called (cf.~\cite[p.~511]{Adams1928})
the \emph{characteristic polynomial} associated to $\sigma_k$.
Since, by the definition of the Newton-Puiseux diagram and the set $I_k$, we have 
\begin{equation}\label{bound_dj}
\begin{aligned}
d(j)-d(j_{k-1})
&= (j-j_{k-1}) \sigma_k= \frac{j-j_{k-1}}{j_k-j_{k-1}} \left(d(j_k)-d(j_{k-1})\right)
\\ &
\leq d(j_k)-d(j_{k-1})=M
\end{aligned}
\end{equation}
for all $j\in I_k$, with equality only for $j=j_k$, it follows that $\deg P_k=M$.

We also note that $P(0)=c_{j_{k-1},d(j_{k-1})}\neq 0$.

\begin{theorem} \label{coeff}
Let $f$ be a transcendental entire solution of \eqref{101} with growth given by~\eqref{104} 
and Taylor series~\eqref{105}.
Let $M$, $N$ and $\rho$ be as in~\eqref{MN} and~\eqref{defrho}, let
$P_k$ be the characteristic polynomial associated to $\sigma_k$
as defined in~\eqref{p_k}  and let $\lambda_1,\dots,\lambda_l$ be the 
roots of $P_k$, with multiplicities $m_1,\dots,m_l$ so that
\begin{equation}\label{sum_mj}
\sum_{j=1}^l m_j=M.
\end{equation}

Then there exist $\mu\in \{|\lambda_j|\colon 1\leq j\leq l\}$  
and for each $j$ with $|\lambda_j|=\mu$ a polynomial $Q_j$ with $\deg Q_j\leq m_j-1$,
not all $Q_j$ vanishing identically, such that
\begin{equation} \label{asymp-coeff}
\alpha_n\rho^{-N n^2}
=\sum_{\{j\colon |\lambda_j|=\mu\}} Q_j(n)\lambda_j^n +\OO\!\left(\delta^n\right)
\end{equation}
for some $\delta\in (0,\mu)$ as $n\to\infty$. In fact, this holds for any $\delta$ satisfying
\begin{equation} \label{4h1}
\delta> \max_{\{j\colon |\lambda_j|< \mu\}} |\lambda_j|
\quad\text{and}\quad
\delta>  |\rho|^2\mu.
\end{equation}
\end{theorem}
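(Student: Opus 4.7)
The plan is to convert~\eqref{101} into a linear recurrence for the normalized Taylor coefficients $\beta_n:=\alpha_n\rho^{-Nn^2}$, isolate a constant-coefficient recurrence of order $M$ governed by $P_k$, and control the remainder by a variation-of-parameters argument.

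Substituting $f(z)=\sum_n\alpha_n z^n$ into~\eqref{101} and comparing coefficients of $z^n$ for $n>\deg b$ gives $\sum_{j,i}a_{j,i}\alpha_{n-i}q^{j(n-i)}=0$. After replacing $\alpha_{n-i}$ by $\beta_{n-i}\rho^{N(n-i)^2}$, using $q=\rho^{2M}$, and recognising $a_{j,i}\rho^{Ni^2-2Mji}=c_{j,i}$ in the resulting exponent, division by $\rho^{Nn^2}$ rewrites the recurrence as
\[
\sum_{j,i} c_{j,i}\,\beta_{n-i}\,\rho^{2n\,e(j,i)}=0,\qquad e(j,i):=Mj-Ni.
\]
The convexity of the Newton--Puiseux diagram forces $e(j,i)\geq C_k$, where $C_k$ is the common value of $e$ at $(j_{k-1},d(j_{k-1}))$ and $(j_k,d(j_k))$, with equality exactly for $j\in I_k$ and $i=d(j)$. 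Dividing through by $\rho^{2nC_k}$ therefore yields $L[\beta]_n=\varepsilon_n$, where $L[\phi]_n:=\sum_{j\in I_k}c_{j,d(j)}\phi_{n-d(j)}$ is a constant-coefficient linear recurrence of order $M$ with characteristic polynomial $P_k$, and $\varepsilon_n$ is a sum of terms each carrying a factor $\rho^{2n(e(j,i)-C_k)}$ with $e(j,i)-C_k\geq 1$.

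Next I would establish an a priori exponential bound on $|\beta_n|$: the Cauchy estimates applied to~\eqref{104} give the coarse bound $|\beta_n|\leq |\rho|^{-o(n^2)}$, and a bootstrap through the recurrence sharpens this to $|\beta_n|\leq CR^n$ for some $R$, using that $L$ has constant coefficients, $\varepsilon$ is exponentially smaller than $\beta$, and an exponentially growing solution of a finite-order perturbed recurrence cannot exceed the rate set by its largest characteristic root. Then $|\varepsilon_n|\leq C|\rho|^{2n}\sup_{n'<n}|\beta_{n'}|$. To extract the asymptotic of $\beta_n$ I would rewrite $L[\beta]=\varepsilon$ as a first-order system $V_n=AV_{n-1}+E_n$ via the companion matrix of $L$, pass to a Jordan basis to decouple the $\lambda_j$-eigenspaces, and solve each resulting line by summation. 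Each homogeneous component contributes a term $R_j(n)\lambda_j^n$, while the discrete convolution against $E_n$ produces errors bounded by $C(|\rho|^2\mu)^n$ up to polynomial factors, which is the source of the second condition $\delta>|\rho|^2\mu$ in~\eqref{4h1}.

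The main technical difficulty is the final peeling-off step. One must identify $\mu$ as the largest modulus among the $|\lambda_j|$ at which the leading polynomial coefficient in the expansion of $\beta$ is not forced to vanish, argue inductively that contributions from all roots with $|\lambda_j|<\mu$ together with the convolution errors are absorbed by $\OO(\delta^n)$ for any $\delta$ satisfying both inequalities of~\eqref{4h1}, and verify the non-triviality clause $Q_j\not\equiv 0$ for some $j$ with $|\lambda_j|=\mu$ by showing that the opposite would force $\beta_n=\OO(\delta^n)$ for arbitrarily small admissible $\delta$, which is inconsistent with the growth~\eqref{104}.
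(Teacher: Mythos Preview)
The setup you describe---substituting the Taylor series, normalizing to $\beta_n=\alpha_n\rho^{-Nn^2}$, and isolating the constant-coefficient operator $L$ with characteristic polynomial $P_k$---is exactly what the paper does. The difficulty you underestimate is the shape of the remainder.

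The full recurrence obtained by comparing coefficients has order $d=\max_j d(j)$, whereas $L$ has order $M=d(j_k)-d(j_{k-1})$ and involves only the shifts $\beta_{n-d(j_{k-1})},\dots,\beta_{n-d(j_k)}$. In general $d(j_{k-1})>0$ or $d(j_k)<d$, and then your remainder $\varepsilon_n$ contains terms $\beta_{n-i}$ with $i$ \emph{outside} the window $[d(j_{k-1}),d(j_k)]$. Consequently the companion-matrix rewriting $V_n=AV_{n-1}+E_n$ with an $M\times M$ matrix does not close: $E_n$ depends on entries of $\beta$ not present in $V_{n-1}$. Enlarging to a $d\times d$ system does not help either, since the limiting leading coefficient then vanishes and one cannot even solve for $\beta_n$. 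Your assertion that ``an exponentially growing solution of a finite-order perturbed recurrence cannot exceed the rate set by its largest characteristic root'' is false in precisely this degenerate situation: the paper exhibits a second-order recurrence of this type whose outer limiting coefficients vanish and which admits solutions $y(n)=2^{\pm(n+1)^2}$. The growth hypothesis~\eqref{104} translates, via a Juneja--Kapoor--Bajpai relation between Taylor coefficients and maximum modulus, only to $\limsup_n(\log|\beta_n|)/n^2=0$, not to an a priori exponential bound; the bootstrap you allude to is where the real work lies, and it must use this sub-quadratic hypothesis in an essential way.

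The paper handles exactly this point. It invokes the Agarwal--Pituk/Bodine--Lutz theorem on recurrences with exponentially convergent coefficients---your variation-of-parameters/Jordan-block step is effectively their proof---but that theorem requires the first and last limiting coefficients to be nonzero. The paper then proves an extension (Lemma~\ref{lemma-ap2}) covering the case where the outer limiting coefficients vanish, under the sub-quadratic growth hypothesis above: one first establishes inequalities of the form $|y(n)|\le K^n\max_{1\le j\le d-s}|y(n-j)|$ for large $n$, and uses them to absorb the spurious outer terms into the inner ones, thereby reducing the effective order from $d$ down to $M$, after which the standard theorem applies. Your sketch would need to supply an argument of this kind before the variation-of-parameters step becomes legitimate.
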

The proof is based on results of Agarwal and Pituk~\cite{Agarwal2007} as well as Bodine and 
Lutz~\cite{Bodine2009}; see Lemma~\ref{lemma-ap} below.
In the case that $K=1$ and thus $j_0=0$ and $j_1=m$ we can apply their results directly,
but the general case requires some extensions of their arguments; see Lemma~\ref{lemma-ap2}.

\begin{theorem} \label{zeros}
Let $f$ be a transcendental entire solution of \eqref{101} with growth given by~\eqref{104}.
Let $\mu$ be as in Theorem~{\rm\ref{coeff}} and suppose that the 
roots of the characteristic polynomial $P_k$ associated to $\sigma_k$
which have modulus $\mu$ differ only by roots of unity.
Then the zeros of $f$ are asymptotic to finitely many geometric progressions. 

More precisely, if $L\in\N$ with
$\lambda_i^{LM}=\lambda_j^{LM}$ whenever $|\lambda_i|=|\lambda_j|=\mu$, then
the set of zeros of $f$ can be written in the form
\begin{equation} \label{107a2}
\left\{z_{n,\nu}\colon\nu\in\{1,2,\dots,LM\},n\in\N\right\}
\end{equation}
such that for each $\nu$ there exists $A_\nu\in\C\setminus\{0\}$ with 
\begin{equation} \label{107a1}
z_{n,\nu}\sim A_\nu q^{-LN n}
\end{equation}
 as $n\to\infty$.
If, in addition, all roots of $P_k$ of modulus $\mu$ are simple, then~\eqref{107a1} can
be improved to
\begin{equation} \label{107a}
z_{n,\nu}=A_\nu q^{-LN n}
\left(1+\OO\!\left(\!\left(\frac{\delta}{\mu} \right)^{\!n}\right)\!\right),
\end{equation}
for any $\delta$ satisfying~\eqref{4h1}.
\end{theorem}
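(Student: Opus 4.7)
The plan is to combine Theorem~\ref{coeff} with a saddle-point rescaling: substitute $z=q^{-LNn}w$ into $f(z)=\sum_k \alpha_k z^k$, identify the block of dominant terms (those with $k$ near $MLn$), and show that the rescaled sum converges to an entire function $G(w)$ whose zeros dictate the location of the zeros of $f$ via Rouch\'e's theorem.

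Writing $k=MLn+j$ and using $q^{-LN}=\rho^{-2ML}$ one finds $\rho^{Nk^2}q^{-LNnk}=\rho^{-NM^2L^2n^2}\rho^{Nj^2}$. Let $\Lambda$ denote the common value of $\lambda_i^{LM}$ over indices $i$ with $|\lambda_i|=\mu$, so that $\lambda_i^{MLn}=\Lambda^n$. Inserting~\eqref{asymp-coeff} into the Taylor series yields
\begin{equation*}
f(q^{-LNn}w)=\rho^{-NM^2L^2n^2}\Lambda^n w^{MLn}\bigl(G_n(w)+E_n(w)\bigr),
\end{equation*}
where
\begin{equation*}
G_n(w)=\sum_{j\geq -MLn}\rho^{Nj^2}w^j\sum_{\{i\colon|\lambda_i|=\mu\}}Q_i(MLn+j)\lambda_i^j
\end{equation*}
and $E_n$ comes from the $\OO(\delta^k)$ remainder in~\eqref{asymp-coeff}. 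The condition $\delta>|\rho|^2\mu$ from~\eqref{4h1} makes the resulting tail geometrically convergent, giving $E_n(w)=\OO((\delta/\mu)^n)$ uniformly on compact subsets of $\C\setminus\{0\}$.

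When all roots of $P_k$ of modulus $\mu$ are simple, each $Q_i=c_i$ is constant, so $\sigma(j):=\sum_i c_i\lambda_i^j$ depends only on $j$ and satisfies $\sigma(j+LM)=\Lambda\sigma(j)$. Replacing the range $j\geq -MLn$ by $j\in\Z$ at negligible cost, $G_n\to G$ uniformly on compact subsets of $\C\setminus\{0\}$, where $G(w)=\sum_{j\in\Z}\rho^{Nj^2}\sigma(j)w^j$. Splitting the sum into residues modulo $LM$ expresses $G$ as a finite linear combination of Jacobi theta functions, so $G$ is entire and not identically zero (the $Q_j$ not all vanishing). A direct computation gives the quasi-periodicity $G(q^{-LN}w)=\rho^{-NM^2L^2}\Lambda w^{ML}G(w)$, and the argument principle then shows that $G$ has exactly $LM$ zeros $B_1,\dots,B_{LM}$ in each fundamental annulus $\{r_0\leq|w|<|q|^{-LN}r_0\}$; this is consistent with the density $n(r,f)\sim\sigma_k(\log r)/(-\log|q|)$ implied by Jensen's formula and~\eqref{104}. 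A quantitative form of Rouch\'e's theorem on small disks around each simple $B_\nu$, combined with the bound on $E_n$, produces unique zeros $w_{n,\nu}$ of $G_n+E_n$ with $w_{n,\nu}=B_\nu(1+\OO((\delta/\mu)^n))$. Setting $A_\nu:=B_\nu$ and $z_{n,\nu}:=q^{-LNn}w_{n,\nu}$ gives~\eqref{107a}. In the general multiple-root case one first factors out a dominant polynomial in $n$ from $\sum_i Q_i(MLn+j)\lambda_i^j$ before taking the limit; Hurwitz's theorem then yields only the qualitative statement~\eqref{107a1}.

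The main obstacle is the multiple-root case: the factors $Q_i(MLn+j)$ couple $n$ and $j$ through polynomial prefactors of degree up to $m_i-1$, so the correct normalization must be identified uniformly in $w$, and one must verify that the resulting limit $G$ is still entire and not identically zero. This nondegeneracy ultimately rests on the non-vanishing of at least one $Q_j$ guaranteed by Theorem~\ref{coeff}, and it is precisely the loss of a quantitative rate in this reduction that forces the weaker conclusion~\eqref{107a1} instead of~\eqref{107a}.
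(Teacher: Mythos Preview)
Your approach is essentially correct and takes a genuinely different route from the paper. The paper constructs a global comparison function $F(z)=\sum_{n\in\Z}\eta_{r_n}\rho^{Nn^2}\lambda^n z^n$, shows via its functional equation that $F$ factors as a product of $LM$ theta functions (so its zeros are explicit geometric progressions), and then uses the pointwise lower bound of Lemma~\ref{asymptheta} to prove $|R|<|F|$ outside small disks about the zeros of~$F$; Rouch\'e then matches the zeros of $f$ to those of~$F$. In the multiple-root case the paper replaces $F$ by $G=F^{(\kappa)}$ and needs the two auxiliary Lemmas~\ref{lemma-ff'} and~\ref{lemma-fM} on zeros and lower bounds of derivatives of canonical products. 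Your rescaling $z=q^{-LNn}w$ accomplishes the same comparison annulus by annulus: your limiting function $G(w)$ is, up to a power of $w$, the paper's~$F$, and Hurwitz replaces the explicit theta estimates. The payoff of your route is that the multiple-root case becomes almost free---normalising by $(MLn)^\kappa$ and passing to the limit avoids the derivative lemmas entirely---while the paper's global product representation makes the error bookkeeping in~\eqref{107a} more transparent and yields the refinement of Remark~\ref{rem1}.

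One point does need attention. You apply Rouch\'e ``on small disks around each simple $B_\nu$'', but nothing prevents $G$ from having a multiple zero: the $LM$ points $B_1,\dots,B_{LM}$ may coincide in groups. If $B_\nu$ has multiplicity $m>1$, a naive Rouch\'e on a shrinking disk only gives $w_{n,\nu}-B_\nu=\OO((\delta/\mu)^{n/m})$, weaker than~\eqref{107a}. The paper recovers the full rate by exploiting the product structure: writing $F$ (your $G$) as a product of theta factors, one sees that at a zero $\zeta$ of $f$ the \emph{sum} of $\log|1-p^{2n_l}\zeta/z_l|$ is at most $-C_1\log|\zeta|$, hence some \emph{individual} factor is $\OO(|\zeta|^{-C_1/M_0})$, which gives~\eqref{107a} regardless of multiplicity. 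Your framework supports the same fix---your quasi-periodicity relation forces $G$ to admit the theta-product representation of \cite[Theorem~4]{Bergweiler2003}---but you should invoke it rather than assume simplicity of the~$B_\nu$.
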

In Section~\ref{order2} we will discuss equations of order~$2$; that is, we will assume
that $m=2$ in~\eqref{101}.  Using~\cite[Example~1 and Theorem~3]{Bergweiler2003}
we will see that the conclusion of Theorem~\ref{zeros} need not hold if
the hypothesis on the zeros of $P_k$ which have modulus $\mu$ is not satisfied.
In fact, in the case that $a_j(z)=a_{j,j}z^j$ for $j\in\{0,1,2\}$,
this hypothesis is not only sufficient but also necessary in order to conclude that 
the zeros of every entire solution are asymptotic to a finite number of geometric 
progressions.

We will also see that~\eqref{107a} need not hold if $P_k$ has a multiple root of modulus~$\mu$;
see Case 2 in  Section~\ref{order2}.

We consider what Theorems~\ref{coeff} and~\ref{zeros} say if $I_k=\{j_{k-1},j_k\}$ 
as in~\cite{Bergweiler2003}. In this case we have
\begin{equation} \label{107x}
P_k(z)=c_{j_k,d(j_k)}z^M+c_{j_{k-1},d(j_{k-1})}.
\end{equation}
Choosing $\xi$ with $\xi^M=-c_{j_{k-1},d(j_{k-1})}/c_{j_k,d(j_k)}$ and putting
$\omega_j:=\exp(2\pi i j/M)$ we may take $\lambda_j=\omega_j\xi$.
Note that all $\lambda_j$ are simple roots so that the $Q_j$ in~\eqref{asymp-coeff} are constant.
Since $\lambda_j^n=\xi^n \omega_j^r$ if $n\equiv r  \pmod M$, we see that~\eqref{asymp-coeff} reduces 
to~\eqref{106} in this case, except for a slightly better error term in~\eqref{106}:
instead of the term $\OO(|\rho|^{2n})$ in~\eqref{106} we now only have $\OO(\tau^{2n})$ for any
$\tau>|\rho|$.

Note also that in this case the $\lambda_j$ differ only by roots of unity.
In fact, the hypotheses of Theorem~\ref{zeros} are satisfied with $L=1$. Hence~\eqref{107a} 
takes the form~\eqref{107b}, again apart from the error term. It is possible, however, to
improve the error bound in~\eqref{107a}; see Remark~\ref{rem1}. 

To summarize, Theorems~\ref{coeff} and~\ref{zeros} recover the results of~\cite{Bergweiler2003},
except for a slightly weaker error term.

Theorem~{\rm\ref{coeff}} also allows to sharpen~\eqref{104} as well as~\cite[Theorem 4.8]{Ramis1992}.
\begin{theorem} \label{growth}
Let $f$ be a transcendental entire solution of \eqref{101} with growth given by~\eqref{104}.
Let $\mu$ be as in Theorem~{\rm\ref{coeff}} and let
$\kappa$ be the maximum of the degrees of the polynomials $Q_j$ appearing in~\eqref{asymp-coeff}.
Then
\begin{equation} \label{104b}
\log M(r,f)= \frac{\sigma_k}{-2\log |q|}\left(\log \mu r\right)^2+\kappa\log\log r + \OO(1).
\end{equation}
\end{theorem}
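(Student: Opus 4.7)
The plan is to derive~\eqref{104b} from Theorem~\ref{coeff} via the elementary sandwich
\begin{equation}
\max_{n\ge 0}|\alpha_n|r^n\le M(r,f)\le\sum_{n=0}^\infty |\alpha_n|r^n.
\end{equation}
Set $t_n(r):=n^\kappa(\mu r)^n|\rho|^{Nn^2}$. Since $|\rho|^{Nn^2}=|q|^{n^2/(2\sigma_k)}$, the function $n\mapsto\log t_n(r)$ is strictly concave on $[1,\infty)$ with second derivative $\log|q|/\sigma_k+\OO(n^{-2})$, and its real maximizer $n^*(r)=\sigma_k\log(\mu r)/(-\log|q|)+\OO(1)$ satisfies
\begin{equation}
\log t_{n^*(r)}(r)=\kappa\log\log r+\frac{\sigma_k}{-2\log|q|}(\log\mu r)^2+\OO(1).
\end{equation}
Hence~\eqref{104b} is equivalent to $\log M(r,f)=\log t_{n^*(r)}(r)+\OO(1)$.

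For the upper half, Theorem~\ref{coeff} gives $|\alpha_n|\le C\,n^\kappa\mu^n|\rho|^{Nn^2}$, hence $|\alpha_n|r^n\le C\,t_n(r)$; the concavity of $\log t_n(r)$ collapses $\sum_n t_n(r)$ to a Gaussian-type sum around $n^*(r)$ of order $\OO(t_{n^*(r)}(r))$, so $\log M(r,f)\le \log t_{n^*(r)}(r)+\OO(1)$. For the lower half one needs a matching pointwise estimate $|\alpha_n|\ge c\,n^\kappa\mu^n|\rho|^{Nn^2}$ on a set of $n$ dense enough to include every critical index. Let $J:=\{j:|\lambda_j|=\mu,\,\deg Q_j=\kappa\}$, let $R_j$ denote the leading coefficient of $Q_j$ for $j\in J$, and put $\omega_j:=\lambda_j/\mu$. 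Then Theorem~\ref{coeff} refines to
\begin{equation}
\alpha_n\rho^{-Nn^2}=n^\kappa\mu^n\,T(n)+\OO\!\left(n^{\kappa-1}\mu^n\right),\qquad T(n):=\sum_{j\in J}R_j\omega_j^n,
\end{equation}
where the $\omega_j$ are distinct points on the unit circle and $R_j\neq 0$ for all $j\in J$. The sequence $T$ is Bohr almost-periodic with positive mean square $\sum_{j\in J}|R_j|^2$, so for some $c>0$ and $L\in\N$ every integer interval of length $L$ contains an $n$ with $|T(n)|\ge c$; thus $|\alpha_n|\ge \tfrac{c}{2}\,n^\kappa\mu^n|\rho|^{Nn^2}$ for all sufficiently large such $n$.

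By this relative-density property, for each large $r$ one can select $n\in\N$ with $|n-n^*(r)|\le L$ and $|T(n)|\ge c$; the uniform quadratic estimate $\log t_{n^*(r)+k}(r)-\log t_{n^*(r)}(r)=\OO(1)$ for $|k|\le L$ yields $t_n(r)\ge e^{-\OO(1)}t_{n^*(r)}(r)$, and then $M(r,f)\ge|\alpha_n|r^n$ closes the lower half of~\eqref{104b}. The principal subtlety is precisely this relative-density statement: Theorem~\ref{coeff} alone permits in principle oscillatory near-cancellation among the terms $\lambda_j^n$ of modulus $\mu$, and it is the Bohr almost-periodicity of $T$---together with the passage from relative density in $n$ to the continuously varying critical index $n^*(r)$---that eliminates this ambiguity.
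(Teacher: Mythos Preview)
Your argument is correct, and the upper half is essentially the paper's: the paper packages your Gaussian--sum collapse into an auxiliary lemma on the generalized theta function $\theta_\kappa(z,q):=\sum_{n} n^\kappa q^{n^2}z^n$, proving $\log M(r,\theta_\kappa(\cdot,|q|^{1/2\sigma_k}))=\frac{\sigma_k}{-2\log|q|}(\log r)^2+\kappa\log\log r+\OO(1)$ and then comparing $M(r,f)$ with $M(\mu r,\theta_\kappa)$. The lower half, however, is organized differently in two respects. First, where you invoke Bohr almost-periodicity of $T(n)=\sum_{j\in J}R_j\omega_j^n$ to get relative density of $\{n:|T(n)|\ge c\}$, the paper uses a Vandermonde-matrix lemma: applying it with $w_j=\lambda_j$ and $x_j=\gamma_j\lambda_j^n$ shows that in every block of $s=|J|$ consecutive integers there is one with $|T|$ bounded below. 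This is more elementary and gives the explicit window length $L=s$, whereas your route is softer but equally valid (note that ``positive mean square'' by itself only certifies $T\not\equiv 0$; the relative density then comes from the almost-periodicity, so you might phrase that inference more carefully). Second, you pass from the good indices to all $r$ by selecting, for each $r$, a good $n$ within $L$ of the real maximizer $n^*(r)$ and using the local quadratic expansion of $\log t_n(r)$; the paper instead evaluates the single-term bound $M(r,f)\ge|\alpha_{n_l}|r^{n_l}$ only at the tailored radii $r_l=\mu^{-1}\exp(n_l(-\log|q|)/\sigma_k)$ and then invokes convexity of $\log M(e^x,f)$ in $x$ to interpolate the $\OO(1)$ lower bound to all $r$. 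Your direct approach avoids the convexity lemma entirely and is arguably cleaner here; the paper's route is more modular, isolating the convex-interpolation step as a stand-alone lemma that could be reused elsewhere.
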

We mention that the term $\OO(1)$ in~\eqref{104b} cannot be replaced by $c+o(1)$ 
with a constant~$c$; see Remark~\ref{remOo}.
\begin{ack}
A part of this paper was written during a stay at the Shanghai Center for Mathematical Sciences (SCMS).
I~thank the SCMS for the hospitality.
I~am also grateful to the referee for many helpful suggestions.
\end{ack}

\section{Proof of Theorem~\ref{coeff}} \label{proof-coeff}
A linear recurrence relation
\begin{equation} \label{4a}
x(n)=a_1 x(n-1)+a_2 x(n-2)+\ldots +a_{d} x(n-d)
\end{equation}
with constant coefficients $a_1,\ldots,a_d\in\C$ can be solved by considering the 
associated characteristic polynomial
\begin{equation} \label{4b}
P(\lambda)=\lambda^d -a_1 \lambda^{d-1}-a_2 \lambda^{d-2}-\ldots -a_{d}.
\end{equation}
Denoting by $\lambda_1,\dots,\lambda_k$ the zeros of $P$ and by $m_1,\dots,m_k$ their multiplicities,
the general solution of~\eqref{4a} is given by
\begin{equation} \label{4c}
x(n)=\sum_{j=1}^k Q_j(n) \lambda_j^n,
\end{equation}
where $Q_j$ is a polynomial of degree at most $m_j-1$.

It is plausible that if the coefficients $b_j(n)$ of the recurrence relation
\begin{equation} \label{4d}
y(n)=b_1(n) y(n-1)+b_2(n) y(n-2)+\ldots +b_{d}(n) y(n-d)
\end{equation}
satisfy 
\begin{equation} \label{4e}
\lim_{n\to\infty} b_j(n)=a_j ,
\end{equation}
then a solution of~\eqref{4d} will behave asymptotically like a solution of~\eqref{4a}.
Classical results of Poincar\'e~\cite{Poincare} and Perron~\cite[\S 5]{Perron} say that this is indeed
the case under suitable hypotheses.

We shall use the following result of Agarwal and Pituk~\cite[Theorem~2.3]{Agarwal2007} 
as well as Bodine and Lutz~\cite[Corollary~2]{Bodine2009}
which addresses the case where the convergence in~\eqref{4e} is exponentially fast.
\begin{lemma} \label{lemma-ap}
Let $0<\tau<1$ and suppose that the coefficients $b_j(n)$ in~\eqref{4d} satisfy 
\begin{equation} \label{4f}
b_j(n)=a_j +\OO\!\left(\tau^n\right)
\end{equation}
as $n\to\infty$, with $a_1,\dots,a_d\in\C$ and $a_d\neq 0$.
Let $P$ be the characteristic polynomial of~\eqref{4a}, as defined in~\eqref{4b}, 
and let $\lambda_1,\dots,\lambda_k$ the its zeros, with multiplicities $m_1,\dots,m_k$.

Let $y(n)$ be a solution of~\eqref{4d}. Then, unless $y(n)=0$ for all large $n$, there exist
$\mu\in\{|\lambda_1|,\dots,|\lambda_k|\}$ such that for every $\delta\in (0,\mu)$ satisfying
\begin{equation} \label{4h}
\delta> \max_{\{j\colon |\lambda_j|< \mu\}} |\lambda_j|
\quad\text{and}\quad \delta>  \tau\mu
\end{equation}
we have
\begin{equation} \label{4g}
y(n)=\sum_{\{j\colon |\lambda_j|=\mu\}} Q_j(n)\lambda_j^n +\OO\!\left(\delta^n\right)
\end{equation}
as $n\to\infty$, with polynomials $Q_j$ of degree at most $m_j-1$, which do not all vanish
identically.
\end{lemma}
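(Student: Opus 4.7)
The plan is to rewrite~\eqref{4d} as a first-order vector recurrence and decouple it into spectral blocks corresponding to the distinct eigenvalues of the limiting companion matrix. Set $\mathbf{y}(n) := (y(n),y(n-1),\dots,y(n-d+1))^{\top}$. Then $\mathbf{y}(n) = A(n)\mathbf{y}(n-1)$, where $A(n)$ is the companion matrix whose first row is $(b_1(n),\dots,b_d(n))$ and whose subdiagonal consists of ones; by~\eqref{4f}, $A(n) = A + \OO(\tau^n)$, with $A$ the companion matrix of the constant-coefficient recurrence~\eqref{4a}, whose eigenvalues are precisely $\lambda_1,\dots,\lambda_k$ with multiplicities $m_1,\dots,m_k$ and whose invertibility is guaranteed by $a_d\neq 0$. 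A constant conjugation $T^{-1}AT = J = \mathrm{diag}(J_1,\dots,J_k)$ with $J_j = \lambda_j I_{m_j}+N_j$ puts the limit equation in Jordan form; writing $\mathbf{w}(n) := T^{-1}\mathbf{y}(n)$, the recurrence becomes $\mathbf{w}(n) = (J+E(n))\mathbf{w}(n-1)$ with $E(n) = \OO(\tau^n)$.

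The central step is to construct a sequence of invertible matrices $S(n) = I+\OO(\tau^n)$ such that, after the substitution $\mathbf{z}(n) := S(n)^{-1}\mathbf{w}(n)$, the transition matrix becomes block-diagonal with blocks $J_j+\tilde E_j(n)$ where $\tilde E_j(n) = \OO(\tau^n)$. The off-diagonal perturbations are eliminated block pair by block pair by solving Sylvester-type equations $J_i X - X J_j = (\text{off-diagonal part of the perturbation})$ at each step $n$; since $\lambda_i\neq\lambda_j$ for $i\neq j$, the associated linear operators are invertible with norms bounded uniformly in $n$, so the exponential decay of $E(n)$ propagates to both $S(n)-I$ and to the block errors $\tilde E_j(n)$.

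The third step is a per-block analysis. For each $j$, iterating $\mathbf{z}_j(n) = (J_j+\tilde E_j(n))\mathbf{z}_j(n-1)$ and comparing with the unperturbed evolution $J_j^n\mathbf{z}_j(0) = \lambda_j^n\sum_{i=0}^{m_j-1}\binom{n}{i}\lambda_j^{-i}N_j^i\mathbf{z}_j(0)$ (for $\lambda_j\neq 0$; the case $\lambda_j=0$ is excluded since $a_d\neq 0$) yields $\mathbf{z}_j(n) = \lambda_j^n \mathbf{p}_j(n)+\OO((\tau|\lambda_j|)^n n^{m_j-1})$, for a polynomial vector $\mathbf{p}_j$ of degree at most $m_j-1$ determined by $\mathbf{z}_j(0)$ and $N_j$. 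If $y(n)$ is not eventually zero, then, using invertibility of $T$ and $S(n)$ together with the fact that a block with vanishing initial data contributes zero identically, at least one $\mathbf{p}_j$ does not vanish identically. Let $\mu$ be the largest $|\lambda_j|$ with $\mathbf{p}_j\not\equiv 0$. Reassembling the blocks, those with $|\lambda_j|=\mu$ produce, after undoing the change of variables, the main term $\sum_{\{j\colon|\lambda_j|=\mu\}}Q_j(n)\lambda_j^n$ with $\deg Q_j\le m_j-1$ and not all $Q_j$ zero; those with $|\lambda_j|<\mu$ contribute a term of order $\max_{|\lambda_j|<\mu}|\lambda_j|^n\cdot n^{d-1}$, and the per-block errors contribute a term of order $(\tau\mu)^n n^{d-1}$. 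Both error sources are absorbed into $\OO(\delta^n)$ whenever $\delta$ satisfies~\eqref{4h}.

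The main obstacle is the decoupling step: one must produce $S(n)$ with $S(n)=I+\OO(\tau^n)$, uniformly in $n$, that simultaneously removes all off-diagonal perturbations without introducing new off-diagonal terms of comparable size. The key is the spectral gap $|\lambda_i-\lambda_j|>0$ for $i\neq j$, which makes the relevant Sylvester operators invertible with uniformly bounded inverses; care is required because the Jordan nilpotent parts $N_j$ couple coordinates inside a block and must be preserved throughout the decoupling, and because the construction must be arranged iteratively (or by a contraction/fixed-point argument) so that the corrections at each step do not destroy the geometric decay of the perturbation.
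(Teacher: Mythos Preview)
The paper does not supply its own proof of this lemma: it is quoted from Agarwal--Pituk~\cite{Agarwal2007} and, in the sharper form with the full range~\eqref{4h} for~$\delta$, from Bodine--Lutz~\cite{Bodine2009}, who establish it for first-order systems. Your outline---pass to the companion system, conjugate the limit matrix to Jordan form, remove the off-diagonal perturbation by a near-identity transformation $S(n)=I+\OO(\tau^n)$, and then read off the asymptotics block by block via variation of constants---is precisely the Bodine--Lutz strategy, so your proposal and the paper's cited source agree in substance.

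One small caution on the write-up. In the non-autonomous problem the equations determining the off-diagonal blocks of $S(n)$ are not the static Sylvester equations $J_iX-XJ_j=\cdots$ but difference equations of the type $Q_{ij}(n)J_j-J_iQ_{ij}(n-1)=\cdots$, solved by one-sided summation; the direction of summation and the resulting $\OO(\tau^n)$ bound are governed by the comparison of $|\lambda_i|/|\lambda_j|$ with~$\tau$ (and by $a_d\neq 0$, ensuring all $\lambda_j\neq 0$), not by the gap $|\lambda_i-\lambda_j|>0$ alone. In particular, when two distinct eigenvalues have the same modulus, invertibility of the static Sylvester operator is not what saves you---the exponential decay of $E(n)$ is. Your closing paragraph shows you are aware that more than a pointwise inversion is needed, but the earlier sentence somewhat understates this point.
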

Agarwal and Pituk~\cite[Theorem~2.3]{Agarwal2007} proved this with $\delta=\mu-\varepsilon$ for some $\varepsilon>0$.
Bodine and Lutz~\cite{Bodine2009} gave a different proof of this result, showing
that one may take any $\delta$ satisfying~\eqref{4h}.
In fact, they gave a corresponding result for first order systems of linear difference equations.

We can rewrite the recurrence relation~\eqref{4d} in the form
\begin{equation} \label{4i}
\sum_{j=0}^d b_j(n) y(n-j)=0. 
\end{equation}
To obtain~\eqref{4i} from~\eqref{4d} one simply puts $b_0(n)\equiv -1$.
In the opposite direction one divides~\eqref{4i} by $b_0(n)$, provided this term is non-zero.

Thus Lemma~\ref{lemma-ap} can also be applied to equations given in 
the form~\eqref{4i}, provided that~\eqref{4f} also holds for $j=0$,
with $a_0\neq 0$. We thus require that the numbers $a_j$ given by~\eqref{4f} 
satisfy $a_0\neq 0$ and $a_d\neq 0$.

One consequence of~\eqref{4g} is that
\begin{equation} \label{4l1}
-\infty< \limsup_{n\to\infty}\frac{\log |y(n)|}{n}<\infty.
\end{equation}
In fact, the upper limit in~\eqref{4l1} is given by $\log\mu$.

We note that~\eqref{4l1} need not hold if $a_0= 0$ or $a_d= 0$.
To see this, consider for example the equation
\begin{equation} \label{4j}
\frac{1}{2^{2n\pm 1}} y(n) - \left( 1 +\frac{1}{2^{4n}}\right) y(n-1)+
\frac{1}{2^{2n\pm 1}} y(n-2)=0 .
\end{equation}
This equation is of the form~\eqref{4i} with $d=2$, and~\eqref{4f} is satisfied 
with $a_0=a_2=0$, $a_1=-1$ and $\tau=1/4$.
It has the solutions 
\begin{equation} \label{4j0}
y(n)=2^{\pm (n+1)^2} .
\end{equation}
For these solutions we have $(\log|y(n)|)/n\to\pm\infty$ so that~\eqref{4l1} does not hold.
We conclude that if the conditions $a_0\neq 0$ and $a_d\neq 0$ are not satisfied,
then~\eqref{4i} may have solutions for which~\eqref{4l1} does not hold. In particular,
these solutions are not of the form~\eqref{4g}.

The following result addresses the case that $a_0=0$ or $a_d=0$. 
Essentially, it says that a solution of~\eqref{4i} is of the form~\eqref{4g}, if 
the condition~\eqref{4l1} is satisfied. 
In fact, a weaker growth restriction than~\eqref{4l1} will suffice.

This result will be the main tool in the proof of Theorem~\ref{coeff}.
Its proof will use Lemma~\ref{lemma-ap}, by reducing the equation considered to a type where
this lemma is applicable.
\begin{lemma} \label{lemma-ap2}
Let $0<\tau<1$ and suppose that the coefficients $b_j(n)$ in~\eqref{4i} satisfy~\eqref{4f}
as $n\to\infty$. Let $s,t\in\{0,1,\dots,d\}$ with $s< t$ such that $a_s\neq 0$, $a_t\neq 0$ and $a_j=0$ for
$0\leq j<s$ and $t<j\leq d$.

Let 
\begin{equation} \label{4k}
P(z)=\sum_{j=0}^{t-s} a_{s+j} z^{j}
\end{equation}
and let $\lambda_1,\dots,\lambda_k$ the the zeros of $P$, with multiplicities $m_1,\dots,m_k$.
Let $y(n)$ be a solution of~\eqref{4i} satisfying
\begin{equation} \label{4l}
\limsup_{n\to\infty}\frac{\log |y(n)|}{n^2}=0.
\end{equation}
Then there exists $\mu\in\{|\lambda_1|,\dots,|\lambda_k|\}$ such that~\eqref{4g} holds for every $\delta$ satisfying~\eqref{4h},
with polynomials $Q_j$ of degree at most $m_j-1$, which do not all vanish identically.
\end{lemma}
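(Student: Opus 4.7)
The plan is to reduce equation~\eqref{4i} to one to which Lemma~\ref{lemma-ap} can be applied directly. Shifting the index by $s$, the equation at position $n+s$ can be rewritten as
\begin{equation*}
\sum_{l=0}^{t-s}\tilde b_l(n)\,y(n-l)=-R(n),
\end{equation*}
where $\tilde b_l(n):=b_{s+l}(n+s)$ satisfies $\tilde b_l(n)=a_{s+l}+\OO(\tau^n)$---so, in particular, $\tilde b_0(n)\to a_s\neq 0$ and $\tilde b_{t-s}(n)\to a_t\neq 0$---and the remainder
\begin{equation*}
R(n):=\sum_{j=0}^{s-1}b_j(n+s)\,y(n+s-j)+\sum_{j=t+1}^{d}b_j(n+s)\,y(n+s-j)
\end{equation*}
has coefficients of order $\OO(\tau^n)$. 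The characteristic polynomial of the homogeneous part of the reduced equation, interpreted in the sense compatible with Lemma~\ref{lemma-ap}, is precisely the polynomial $P$ of~\eqref{4k}.

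The central difficulty is to upgrade the subquadratic growth bound~\eqref{4l} to an exponential one, $|y(n)|=\OO(\gamma^n)$ for some $\gamma>0$. Once this is achieved, $|R(n)|=\OO((\tau\gamma)^n)$, so the remainder is exponentially small after possibly shrinking $\tau$ (which is legitimate since~\eqref{4f} persists for any $\tau'\in[\tau,1)$). The intuition is that the ``ghost'' solutions of~\eqref{4i}---those present because $a_0$ or $a_d$ may vanish---must grow at least like $|\tau|^{-cn^2}$ for some $c>0$, which is ruled out by~\eqref{4l}; hence any solution satisfying~\eqref{4l} is of ``regular'' type and therefore exponentially bounded. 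To make this precise I would solve the equation at index $n+s$ for $y(n)$ in terms of the neighbouring $y$-values, iterate the resulting inequality, and show that any alleged faster-than-exponential subsequence in $|y(n)|$ forces a super-exponential lower bound of the ghost type, contradicting~\eqref{4l}.

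With $|R(n)|=\OO(\tau_1^n)$ for some $\tau_1\in(0,1)$ in hand, it remains to pass from the inhomogeneous equation to the homogeneous form covered by Lemma~\ref{lemma-ap}. The cleanest device is to absorb the remainder into the leading coefficient by setting $\hat b_0(n):=\tilde b_0(n)+R(n)/y(n)$, which, in view of $|y(n)|\asymp\mu^n$, satisfies $\hat b_0(n)=a_s+\OO(\eta^n)$ for some $\eta\in(0,1)$; we take $\hat b_l=\tilde b_l$ for $l\geq 1$. Applying Lemma~\ref{lemma-ap} to the homogeneous recurrence $\sum_{l=0}^{t-s}\hat b_l(n)y(n-l)=0$ of order $t-s$, whose coefficients satisfy the required hypotheses, yields~\eqref{4g} with $\lambda_j$ the zeros of $P$, $\mu\in\{|\lambda_j|\}$, polynomials $Q_j$ of degree at most $m_j-1$ not all vanishing, and the claimed error $\OO(\delta^n)$ for every $\delta$ satisfying~\eqref{4h}. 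The main obstacle in the whole argument is the bootstrap from~\eqref{4l} to exponential boundedness of $y$.
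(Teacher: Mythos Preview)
Your reduction outline is on the right track, but the absorption step has a genuine gap. You set $\hat b_0(n):=\tilde b_0(n)+R(n)/y(n)$ and claim this equals $a_s+\OO(\eta^n)$ ``in view of $|y(n)|\asymp\mu^n$''. That appeal is circular: $|y(n)|\asymp\mu^n$ is (a strengthening of) the conclusion you are trying to prove, not an available hypothesis. Worse, no pointwise lower bound on $|y(n)|$ is available even a posteriori --- the sum $\sum Q_j(n)\lambda_j^n$ can vanish or be arbitrarily small on infinitely many~$n$, and $y(n)$ itself may be zero, so $R(n)/y(n)$ need not even be defined. An exponential \emph{upper} bound $|y(n)|=\OO(\gamma^n)$ alone yields $|R(n)|=\OO((\tau\gamma)^n)$, but that says nothing about $R(n)/y(n)$; and ``shrinking $\tau$'' goes the wrong way, since~\eqref{4f} only persists for larger~$\tau$.

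The paper avoids this by proving a \emph{relative} inequality rather than an absolute exponential bound: for any $K>1$ and all large~$n$,
\[
|y(n-i)|\leq K^{sn}\max_{s\leq j\leq d-1}|y(n-j)|\qquad(0\leq i\leq s-1),
\]
together with an analogous inequality for $t+1\leq i\leq d$. This bounds the ``extra'' values $y(n-i)$ with $i<s$ or $i>t$ directly in terms of the ``central'' values $y(n-j)$ with $s\leq j\leq t$ that appear in the reduced recurrence. Since the extra coefficients are $\OO(\tau^n)$, each extra term $b_i(n)y(n-i)$ is bounded by $\OO\!\left((K^{s}\tau)^n\right)\cdot\max_{s\leq j\leq t}|y(n-j)|$ and can therefore be absorbed into the coefficients of the central terms while preserving an error of size $\OO\!\left((K^{c}\tau)^n\right)$ for some fixed~$c$; no division by $y(n)$ and no lower bound on $y$ is needed. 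Letting $K\downarrow 1$ after applying Lemma~\ref{lemma-ap} recovers the sharp threshold $\delta>\tau\mu$. The growth hypothesis~\eqref{4l} is used precisely to establish these relative inequalities, via a contradiction argument close in spirit to what you sketched.
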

\begin{rem} 
The example~\eqref{4j} with solutions~\eqref{4j0} shows that the condition~\eqref{4l} cannot be omitted
and is in fact sharp.
\end{rem} 
\begin{proof}[Proof of Lemma~\ref{lemma-ap2}] 
First we show that given $K>1$ we have
\begin{equation}
\label{402}
|y(n)|\leq K^n\max_{1\leq j\leq d-s}|y(n-j)|
\end{equation}
for all large~$n$.
The argument for this is in part similar to that in~\cite[Lemma~2]{Bergweiler2003}.
We consider
\begin{equation} \label{4m}
I:=\left\{n>d-s\colon |y(n)|> K^n\max_{1\leq j\leq d-s}|y(n-j)| \right\}
\end{equation}
and have to show that $I$ is bounded. In order to do so we may assume that $K^d\tau<1$.

First we show that if $m\in I$ and $p\in\N$ such that $\{m+1,\dots,m+p\}\cap I=\emptyset$, then 
\begin{equation} \label{4r0}
|y(m+j)| \leq K^{jm+j(j+1)/2} |y(m)|
\end{equation}
for $1\leq j\leq p$. 
In order to see this, we note that since $m\in I$ and $m+1\notin I$ we have
\begin{equation} \label{4o0}
|y(m+1)|\leq K^{m+1} \max_{1\leq j\leq d-s}|y(m+1-j)| = K^{m+1}|y(m)|.
\end{equation}
Next, if also $m+2\notin I$, then
\begin{equation} \label{4q0}
\begin{aligned}
|y(m+2)|
&\leq K^{m+2} \max_{1\leq j\leq d-s}|y(m+2-j)|
\\ &
= K^{m+2}\max\{ |y(m+1)|,|y(m)|\}
\leq K^{2m+3} |y(m)|.
\end{aligned}
\end{equation}
Induction shows that~\eqref{4r0} holds for $1\leq j\leq p$.

Suppose now that $I$ is unbounded.
We show that there exist arbitrarily large $m\in I$ such that~\eqref{4r0} holds for $1\leq j\leq s$.
Suppose that this is not the case. Then for every large $m\in I$ there 
exists $l\in\{1,\dots,s\}$ such that~\eqref{4r0} holds for $1\leq j\leq l-1$, but does 
not hold for $j=l$; that is,
\begin{equation} \label{4r1}
|y(m+l)| > K^{lm+l(l+1)/2} |y(m)|.
\end{equation}
Since~\eqref{4r0} holds for $1\leq j\leq l-1$ we find that 
\begin{equation} \label{4r2}
\begin{aligned}
|y(m+l)| 
&> K^{lm+l(l+1)/2} K^{-jm-j(j+1)/2}  |y(m+j)|
\\ &
\geq K^{lm+l(l+1)/2} K^{-(l-1)m-(l-1)l/2}  |y(m+j)|
= K^{m+l}  |y(m+j)|
\end{aligned}
\end{equation}
for $1\leq j\leq l-1$. Together with~\eqref{4r1} and the assumption that $m\in I$
this yields that $m+l\in I$.
Thus there exists an increasing sequence $(m_k)$ in $I$ such that $l_k:=m_{k+1}-m_k\leq s$ and
\begin{equation} \label{4r3}
|y(m_{k+1})| > K^{l_k m_k +l_k(l_k+1)/2}  |y(m_k)|
\geq K^{m_k}  |y(m_k)|
\geq K^{m_1+\dots +m_k}  |y(m_1)|.
\end{equation}
Since 
\begin{equation} \label{4r4}
m_{k+1}=m_1+\sum_{j=1}^kl_j\leq m_1+ks
\quad\text{and}\quad
\sum_{j=1}^k m_j\geq \sum_{j=1}^k j=\frac{k(k+1)}{2}\geq \frac{k^2}{2}
\end{equation}
this yields that 
\begin{equation} \label{4r5}
\begin{aligned}
\log |y(m_{k+1})| 
&> \frac{k^2}{2} \log K +\log |y(m_{1})|
\\ &
\geq \frac{(m_{k+1}-m_1)^2}{2s^2} \log K +\log |y(m_{1})|
 \geq  \frac{m_{k+1}^2}{3s^2}
\end{aligned}
\end{equation}
for large $k$, contradicting~\eqref{4l}.
We have shown, still assuming that $I$ is unbounded, that there exist arbitrarily large $m\in I$
such that~\eqref{4r0} holds for $1\leq j\leq s$.

For such $m$ we put $n=m+s$ so that $m=n-s$. Then we have $n-s\in I$ and~\eqref{4r0} yields that
\begin{equation} \label{4s0}
\begin{aligned}
|y(n-s+j)| 
&\leq K^{j(n-s)+j(j+1)/2} |y(n-s)|
\\ &
\leq K^{s(n-s)+s(s+1)/2} |y(n-s)|\leq K^{sn}|y(n-s)|
\end{aligned}
\end{equation}
for $1\leq j\leq s$.
Replacing $j$ by $s-j$ we thus see that
\begin{equation} \label{4s}
|y(n-j)| \leq K^{sn} |y(n-s)| \quad\text{for}\ 0\leq j\leq s-1.
\end{equation}
On the other hand, since $n-s\in I$, we have 
\begin{equation} \label{4s1}
|y(n-j)| < K^{s-n} |y(n-s)|
\quad\text{for}\ s+1\leq j\leq d.
\end{equation}
Together with~\eqref{4i} the last two inequalities imply that
\begin{equation} \label{4s2}
\begin{aligned} 
|b_s(n)y(n-s)|
& \leq \sum_{j=0}^{s-1} |b_j(n) y(n-j)|
+ \sum_{j=s+1}^{d} |b_j(n) y(n-j)|
\\ &
\leq |y(n-s)| \left(K^{sn} \sum_{j=0}^{s-1} |b_j(n)| + K^{s-n}\sum_{j=s+1}^{d} |b_j(n)|\right).
\end{aligned} 
\end{equation}
Since $n-s\in I$ we have $y(n-s)\neq 0$ and may thus divide the last inequality by $|y(n-s)|$.
Choosing $C_1$ such that $|b_j(n)-a_j|\leq C_1\tau^n$ for all $j\in\{0,1,\dots,d\}$ and 
all $n\in\N$, and noting that $a_j=0$ for $0\leq j\leq s-1$,
we conclude that 
\begin{equation} \label{4t}
\begin{aligned} 
|a_s|-C_1\tau^n
&\leq  K^{sn}sC_1\tau^n  + K^{s-n}\sum_{j=s+1}^{d} \left( |a_j|  +C_1\tau^n \right)
\\ &
=  sC_1\left(K^{s}\tau\right)^n  + K^{s-n}\sum_{j=s+1}^{d} |a_j|  +K^{s-n}(d-s)C_1\tau^n .
\end{aligned} 
\end{equation}
Since  $a_s\neq 0$  and $K^s\tau<K^d\tau<1$, this is a contradiction for large~$n$.
Hence $I$ is bounded so that~\eqref{402} holds for all large~$n$.

It follows from~\eqref{402} that 
\begin{equation} \label{4u}
|y(n-s+1)|\leq K^{n-s+1}\max_{1\leq j\leq d-s}|y(n-s+1-j)|
= K^{n-s+1}\max_{s\leq j\leq d-1}|y(n-j)|
\end{equation}
for large $n$. As in the proof of~\eqref{4r0} we can use induction to show
that
\begin{equation} \label{4v3}
\begin{aligned} 
|y(n-s+i)| 
&\leq K^{i(n-s)+i(i+1)/2} \max_{s\leq j\leq d-1}|y(n-j)|
\\ &
\leq K^{s(n-s)+s(s+1)/2} \max_{s\leq j\leq d-1}|y(n-j)|
\\ &
\leq K^{sn} \max_{s\leq j\leq d-1}|y(n-j)|
\quad\text{for}\ 1\leq i\leq s.
\end{aligned} 
\end{equation}
Replacing $i$ by $s-i$ we thus find  that
\begin{equation} \label{4v}
|y(n-i)| \leq K^{sn} \max_{s\leq j\leq d-1}|y(n-j)|
\quad\text{for}\ 0\leq i\leq s-1.
\end{equation}
Since $a_i=0$ for $0\leq i\leq s-1$ this yields that
\begin{equation} \label{4v0}
\begin{aligned}
|b_i(n)y(n-i)| 
&\leq C_1 \tau^n  K^{sn} \max_{s\leq j\leq d-1}|y(n-j)|
\\ &
= C_1  \left(K^s\tau\right)^{n} \max_{s\leq j\leq d-1}|y(n-j)|
\quad\text{for}\ 0\leq i\leq s-1.
\end{aligned}
\end{equation}
This allows to rewrite~\eqref{4i} in the form 
\begin{equation} \label{4i1}
\sum_{j=s}^d b_j^*(n) y(n-j)=0 ,
\end{equation}
where the $b_j^*$ satisfy 
$b_j^*(n)=a_j +\OO((K^s\tau)^n)$, say
\begin{equation} \label{4f1}
|b_j^*(n)-a_j|\leq C_2 \left(K^s\tau\right)^n,
\end{equation}
instead of~\eqref{4f}. Thus we have disposed of the first $s$ terms in the sum 
in~\eqref{4i}, at the expense of slightly increasing the error term in~\eqref{4f}.

Our next aim is to dispose of the last $d-t$ terms.
This is achieved by a very similar reasoning, but for the convenience of the reader 
we include the argument.
First, analogously to~\eqref{402}, we show that given $K>1$ we have
\begin{equation}
\label{402a}
|y(n)|\leq K^n\max_{1\leq j\leq t-s}|y(n+j)|
\end{equation}
for large~$n$. We have to show that
\begin{equation} \label{4m1}
J:=\left\{n>t-s\colon |y(n)|> K^n\max_{1\leq j\leq t-s}|y(n+j)| \right\}
\end{equation}
is bounded.
Analogously to~\eqref{4r0} we show first that if $m\in J$ and $p\in\N$ with $p<m$
such that $\{m-1,\dots,m-p\}\cap J=\emptyset$, then
\begin{equation} \label{4x0}
|y(m-j)| \leq K^{jm-j(j+1)/2} |y(m)|
\end{equation}
for $1\leq j\leq p$.
Assuming that $J$ is unbounded we can,
similarly as before, use~\eqref{4l} to show that there are arbitrarily large $m\in J$ such
that~\eqref{4x0} holds for $1\leq j\leq d-t$.
For such $m$ we now put $n=m+t$ so that $m=n-t$. 
Since~\eqref{4x0} holds for $1\leq j\leq d-t$ we obtain 
\begin{equation} \label{4x1}
|y(n-t-j)| \leq K^{j(n-t)-j(j+1)/2} |y(n-t)| \leq K^{(d-t)n}|y(n-t)| 
\end{equation}
for $1\leq j\leq d-t$.
Replacing $j$ by $j-t$ yields
\begin{equation} \label{4x2}
|y(n-j)| \leq K^{(d-t)n}|y(n-t)| \quad\text{for}\ t+1\leq j\leq d.
\end{equation}
On the other hand, since $n-t\in J$, we also have
\begin{equation} \label{4x3}
|y(n-j)| < K^{t-n}|y(n-t)| \quad\text{for}\ s\leq j\leq t-1.
\end{equation}
Inserting the last two inequalities into~\eqref{4i1} yields that
\begin{equation} \label{4y2}
\begin{aligned} 
|b_t^*(n)y(n-t)|
& \leq \sum_{j=s}^{t-1} |b_j^*(n) y(n-j)|
+ \sum_{j=t+1}^{d} |b_j^*(n) y(n-j)|
\\ &
\leq |y(n-t)| \left(K^{t-n} \sum_{j=s}^{t-1} |b_j^*(n)| + K^{(d-t)n}\sum_{j=t+1}^{d} |b_j^*(n)|\right).
\end{aligned} 
\end{equation}
Noting that $a_j=0$ for $t+1\leq j\leq d$ and that $y(n-t)\neq 0$ since $n-t\in J$ we deduce
from~\eqref{4f1} that
\begin{equation} \label{4y3}
\begin{aligned} 
|a_t|-C_2(K^s\tau)^n
&\leq  K^{t-n}\sum_{j=s}^{t-1} \left( |a_j| + C_2(K^s\tau)^n\right) 
 +K^{(d-t)n}(d-t)C_2(K^s\tau)^n 
\\ &
\leq  K^{t-n}\sum_{j=s}^{t-1} |a_j| + K^{t-n}(t-s)C_2(K^s\tau)^n 
 +(d-t)C_2(K^{d-t+s}\tau)^n .
\end{aligned} 
\end{equation}
Since $a_t\neq 0$ and $K^{d-t+s}\tau<K^d\tau<1$, this is a contradiction for large~$n$.
Hence $J$ is bounded, meaning that~\eqref{402a} holds for all large~$n$.

In the same way that we used~\eqref{402} to obtain~\eqref{4v} we can now use~\eqref{402a} to
obtain
\begin{equation} \label{4v1}
|y(n-i)| \leq  K^{(d-t)n} \max_{s+1\leq j\leq t}|y(n-j)|
\quad\text{for}\ t+1\leq i\leq d.
\end{equation}
And similarly as before we can use this to rewrite~\eqref{4i1} in the form
\begin{equation} \label{4i2}
\sum_{j=s}^t b_j^{**}(n) y(n-j)=0
\end{equation}
where the $b_j^{**}$ satisfy
\begin{equation} \label{4f2}
b_j^{**}(n)=a_j +\OO\!\left(\!\left(K^{d-t}K^s\tau\right)^n\right).
\end{equation}
Note that~\eqref{4i2} is equivalent to
\begin{equation} \label{4i3}
\sum_{j=0}^{t-s} \widetilde{b}_{j}(n) y(n-j)=0,
\end{equation}
with
\begin{equation} \label{4f3}
\widetilde{b}_{j}(n):=b_{s+j}^{**}(n+s)=a_{s+j} +\OO\!\left(\!\left(K^{d-t+s}\tau\right)^n\right).
\end{equation}
Since $a_s\neq 0$ and $a_t\neq 0$ we can apply Lemma~\ref{lemma-ap} to the equation~\eqref{4i3}.
Recalling that the equations~\eqref{4d} and~\eqref{4i} are equivalent and noting that the characteristic 
polynomial associated to the equation~\eqref{4i3} is given by~\eqref{4k},
we conclude that $y(n)$ satisfies~\eqref{4g}.
Instead of the condition $\delta>\tau\mu$ as in~\eqref{4h} we first
obtain only $\delta>K^{d-t+s}\tau\mu$. However, since we can take any $K>1$ in this condition,
we are again led to $\delta>\tau\mu$.
\end{proof}
As in~\cite{Bergweiler2003} we will also use the following
lemma, which is a special case of a result of Juneja, Kapoor and Bajpai~\cite[Theorem~1]{Juneja1977}.
\begin{lemma} \label{la1}
Let $f$ be a transcendental entire function with Taylor series given by~\eqref{105}.
Then
\begin{equation} \label{301x}
\limsup_{r\to\infty} \frac{\log M(r,f)}{(\log r)^2}
=\frac14 \limsup_{n\to\infty} \frac{n^2}{-\log|\alpha_n|}.
\end{equation}
\end{lemma}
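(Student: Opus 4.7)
The plan is to establish the two inequalities separately by the standard circle of ideas relating $M(r,f)$ to its Taylor coefficients. Let $A$ denote the left-hand side and $B$ denote the right-hand side of~\eqref{301x}.

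For the inequality $B\leq A$, I would start from Cauchy's estimate $|\alpha_n|\leq M(r,f)r^{-n}$, which yields $-\log|\alpha_n|\geq n\log r-\log M(r,f)$. Fix any $A'>A$, so that $\log M(r,f)\leq A'(\log r)^2$ for all sufficiently large~$r$. Writing $t=\log r$, the lower bound becomes $nt-A't^2$, which as a function of~$t$ is maximized at $t=n/(2A')$ with maximum value $n^2/(4A')$. Substituting $r=\exp(n/(2A'))$ therefore gives $-\log|\alpha_n|\geq n^2/(4A')$ for all large~$n$, hence $n^2/(-\log|\alpha_n|)\leq 4A'$. Letting $A'\downarrow A$ yields $B\leq A$.

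For the reverse inequality $A\leq B$, I would work directly from $M(r,f)\leq\sum_{n=0}^\infty|\alpha_n|r^n$. Fix $B'>B$; then $|\alpha_n|\leq\exp(-n^2/(4B'))$ for all large~$n$. With $t=\log r$, the $n$-th term is bounded by $\exp(nt-n^2/(4B'))$, whose maximum over $n\in\R$ is attained at $n_0=2B't$ and equals $\exp(B't^2)$. Splitting the sum near $n_0$ and dominating the tails by a geometric series, one obtains $M(r,f)\leq C(1+\log r)\exp(B'(\log r)^2)$, so that $\log M(r,f)\leq B'(\log r)^2+O(\log\log r)$. Taking $r\to\infty$ and then $B'\downarrow B$ gives $A\leq B$.

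The main obstacle is the delicate tail estimate in the second step, where one must carefully bound the Taylor series by its maximum term together with polynomial corrections in order to recover the sharp constant $\tfrac14$. A clean way to handle this is via the standard central-index technique: one shows that the maximum term $\mu(r,f):=\max_n |\alpha_n|r^n$ satisfies $\log\mu(r,f)\sim B'(\log r)^2$ and that $M(r,f)\leq \mu(r,f)\cdot (n(r)+1+\text{geometric tail})$, where $n(r)$ is the central index. Of course, as the statement itself notes, this is merely a specialization of~\cite[Theorem~1]{Juneja1977}, and one might simply cite that reference rather than reproduce the argument.
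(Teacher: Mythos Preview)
The paper does not actually prove this lemma: it is stated as a special case of \cite[Theorem~1]{Juneja1977} and simply cited. Your final remark already anticipates this, so in that sense your proposal and the paper agree.

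Your outlined proof is correct and follows the standard route. One minor comment on the second step: the tail estimate is in fact less delicate than you suggest. Writing $n=n_0+k$ with $n_0=2B't$ (or its nearest integer) and completing the square gives
\[
nt-\frac{n^2}{4B'}=B't^2-\frac{k^2}{4B'}+\OO(1),
\]
so that $\sum_{n\geq N}\exp\!\big(nt-n^2/(4B')\big)\leq C\,e^{B't^2}$ with $C=\sum_{k\in\Z}e^{-k^2/(4B')}<\infty$ independent of~$r$. This yields $\log M(r,f)\leq B'(\log r)^2+\OO(1)$ directly, without the $\OO(\log\log r)$ term or any recourse to the central index.
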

Here we put $-\log|\alpha_n|=\infty$ and thus ${n^2}/(-\log|\alpha_n|)=0$  if $\alpha_n=0$.

\begin{proof}[Proof of Theorem~\ref{coeff}]
From the definition of $\sigma_k$ and $I_k$ we have
\begin{equation} \label{301}
d(j) \le d(j_k)+\sigma_{k}(j-j_{k})=d(j_{k-1})+\sigma_{k}(j-j_{k-1})
\end{equation}
if $j\in \{0,1,\dots,m\}$, with equality if and only if $j\in I_k$.
Recalling that $\sigma_k =M/N$ by~\eqref{MN} we may write~\eqref{301} in the form
\begin{equation} \label{301b}
N d(j)- M j\leq N d(j_k)-M j_k= N d(j_{k-1})-M j_{k-1}=:L,
\end{equation}
with strict inequality if $j\notin I_k$. Hence $L+M j-N d(j)\geq 0$,
with strict inequality if $j\notin I_k$.  
For $0\leq j\leq m$ and $0\leq i\leq d(j)$ we thus have
\begin{equation} \label{301d}
L+M j-N i\geq 0,
\end{equation}
with equality if and  only if $(j,i) = (j,d(j))$ for some $j\in I_k$. 

Recall the definition of the $a_{j,i}$ as the coefficients of the $a_j$ 
given in~\eqref{a_j} and put $a_{j,i}=0$ for $i>d(j)$.
Then the Taylor series expansion of the left hand side of~\eqref{101} is given by
\begin{equation} \label{comp-coeff}
\begin{aligned} 
\sum_{j=0}^m a_j(z)f(q^jz)
& =
\sum_{j=0}^m \left( 
\left( \sum_{i=0}^\infty a_{j,i} z^i \right) \cdot
\left( \sum_{\nu=0}^\infty \alpha_{\nu} q^{j\nu} z^\nu \right)
\right)
\\ &
=\sum_{j=0}^m 
\sum_{n=0}^\infty \left( 
\sum_{i=0}^n a_{j,i} \alpha_{n-i} q^{j(n-i)}
\right) z^n
\\ &
=\sum_{n=0}^\infty \left( 
\sum_{i=0}^n \alpha_{n-i}
\sum_{j=0}^m a_{j,i} q^{j(n-i)}
\right) z^n .
\end{aligned} 
\end{equation}
Comparing coefficients in~\eqref{101} we find with
\begin{equation} \label{defd}
d:=\max\limits_{j\in\{0,\dots,m\}} d(j)
\end{equation}
that if $n>\max\{d,\deg(b)\}$, then
\begin{equation} \label{301e}
\sum_{i=0}^d\alpha_{n-i}\sum_{j=0}^m a_{j,i} q^{j(n-i)}=0.
\end{equation}
With 
\begin{equation} \label{defyn}
y(n):=\rho^{-N n^2}\alpha_n
\end{equation}
and $c_{j,i}=a_{j,i}\rho^{N i^2 -2M ji}$ as defined in~\eqref{defc} 
we deduce, using~\eqref{defrho}, that 
\begin{equation} \label{recy}
0= \sum_{i=0}^d \rho^{N(n-i)^2} y(n-i)\sum_{j=0}^m a_{j,i} q^{j(n-i)}
= \rho^{N n^2} \sum_{i=0}^d y(n-i)\sum_{j=0}^m c_{j,i} \rho^{(M j-N i)2n}.
\end{equation}
Multiplying this equation by $\rho^{-Nn^2+2Ln}$  we obtain
\begin{equation} \label{recbeta}
\sum_{i=0}^d B_i(n)y(n-i) =0.
\end{equation}
with
\begin{equation} \label{defB}
B_i(n) :=\rho^{2Ln} \sum_{j=0}^m c_{j,i} \rho^{(M j-N i)2n}=
\sum_{j=0}^m c_{j,i} \rho^{(L+M j-N i)2n}.
\end{equation}
By~\eqref{301d} we have
\begin{equation} \label{asymptBi}
B_i(n)  =\begin{cases}
c_{j,d(j)} +\OO\!\left(|\rho|^{2n}\right) 
& \text{if } j\in I_k \text{ and }i=d(j),\\[2mm]
\OO\!\left(|\rho|^{2n}\right) 
& \text{otherwise.}
\end{cases}
\end{equation}

The recurrence relation~\eqref{recbeta} thus is an equation of the form~\eqref{4i},
with the equation~\eqref{asymptBi} 
for the coefficients being the equivalent of~\eqref{4f}. 
The conclusion will follow from Lemma~\ref{lemma-ap2}, once we 
have checked that it is applicable to this equation.
It thus remains to show that~\eqref{4l} holds.

In order to do so we use Lemma~\ref{la1} which together with~\eqref{104} and~\eqref{MN} yields that
\begin{equation}\label{yn1}
\frac14 \limsup_{n\to\infty} \frac{n^2}{-\log|\alpha_n|}
=\frac{\sigma_k}{-2\log |q|}
=-\frac{M}{2N \log |q|}.
\end{equation}
Thus
\begin{equation}\label{yn2}
\limsup_{n\to\infty}
\frac{\log|\alpha_n|}{n^2}
= \frac{N \log |q|}{2M}.
\end{equation}
Since, by~\eqref{defyn} and~\eqref{defrho},
\begin{equation}\label{yn}
\log|y(n)|
=-Nn^2\log|\rho|+\log|\alpha_n|
=-\frac{N\log|q|}{2M}n^2+\log|\alpha_n| ,
\end{equation}
we conclude that~\eqref{4l} holds.
\end{proof}

\section{Proof of Theorem~\ref{growth}} \label{proof-growth}
The theta function is defined by
\begin{equation} \label{501}
\theta(z,q):=\sum_{n=-\infty}^\infty q^{n^2}z^n.
\end{equation}
The series converges for $|q|<1$ and $z\in\C\setminus\{0\}$.
It satisfies the functional equation 
\begin{equation} \label{equationtheta}
\theta(z,q)=qz\theta\!\left(q^2z,q\right)
\end{equation}
and Jacobi's triple product identity (see, e.g., \cite[Theorem~2.8]{Andrews1976}) 
\begin{equation} \label{jacobi}
\theta(z,q) = \prod_{n=1}^\infty  \left\{\left(1-q^{2n}\right) \left(1+\frac{q^{2n-1}}{z}\right)
\left(1+q^{2n-1}z\right) \right\}.
\end{equation}

More generally, we will consider for $\kappa \in\R$ the function 
\begin{equation} \label{8a}
\theta_\kappa (z,q):=\sum_{n=-\infty}^\infty n^\kappa  q^{n^2}z^n .
\end{equation}
It is easy to see that this series also 
converges for $|q|<1$ and $z\in\C\setminus\{0\}$.
\begin{lemma} \label{la10}
Let $0<q<1$ and $\kappa \in\R$. Then
\begin{equation} \label{8b}
\log M(r, \theta_\kappa (\cdot,q))=\frac{(\log r)^2}{-4\log q}+\kappa \log\log r+\OO(1)
\end{equation}
as $r\to\infty$.
\end{lemma}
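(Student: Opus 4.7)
The plan is a saddle-point style comparison of $M(r, \theta_\kappa(\cdot,q))$ with the value of the series at $z = r$. The triangle inequality gives the upper bound $M(r, \theta_\kappa(\cdot,q)) \leq \sum_{n\in\Z} |n|^\kappa q^{n^2} r^n$, while at $z=r$ every term with $n\geq 1$ is non-negative and the contribution from $n\leq 0$ is $\OO(1)$ uniformly in $r$ (since $r^n\leq 1$ and $\sum |n|^\kappa q^{n^2}$ converges). It therefore suffices to extract, on both sides, the size of the single largest term $t_n(r) := n^\kappa q^{n^2} r^n$ for $n\geq 1$.

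To locate the saddle, I differentiate $\log t_n = \kappa \log n + n^2 \log q + n \log r$ in $n$ and find the critical point $n_0 = \log r/(-2\log q) + \OO(1/\log r)$. Substituting back yields
\[
\log t_{n_0} = \frac{(\log r)^2}{-4\log q} + \kappa \log\log r + \OO(1).
\]
Taking $n^*$ to be the closest integer to $n_0$, the same asymptotic holds for $\log t_{n^*}$ since the log-concave function $\log t_n$ changes by $\OO(1)$ over one unit step near its maximum.

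For the upper bound I write $n = n^* + m$, note that $q^{2 n^*} r$ is bounded above and away from $0$ as $r \to \infty$, and obtain
\[
\frac{t_{n^*+m}(r)}{t_{n^*}(r)} = \left(\frac{n^*+m}{n^*}\right)^{\!\kappa} q^{m^2}\left(q^{2 n^*} r\right)^{\!m} \leq C^{|m|} \left(\frac{|n^*+m|}{n^*}\right)^{\!\kappa} q^{m^2}
\]
for some constant $C$ independent of $r$. Splitting the sum over $m\in\Z$ at $|m| \leq n^*/2$, where the prefactor $((n^*+m)/n^*)^\kappa$ is $\OO(1)$, and the tail $|m| > n^*/2$, where the super-polynomial decay $q^{m^2}$ absorbs both $C^{|m|}$ and a polynomial growth of the prefactor in $|m|$, gives a total bound $\OO(1)$. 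Combined with the bounded contribution from $n\leq 0$, this yields $\log M(r,\theta_\kappa(\cdot,q)) \leq \log t_{n^*}(r) + \OO(1)$. The matching lower bound $\theta_\kappa(r,q) \geq t_{n^*}(r) - \OO(1)$ is immediate from non-negativity of the $n\geq 1$ terms.

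The main obstacle is tail control of the ratio sum when $n^* + m$ is small or negative (so that $(|n^*+m|/n^*)^\kappa$ can behave badly, especially for negative $\kappa$), but this is precisely what the split at $|m| = n^*/2$ resolves. A secondary delicate point is that the discrete maximum at $n^*$ differs from the continuous critical point $n_0$ by $\OO(1)$, and this is the source of the unavoidable $\OO(1)$ error term in~\eqref{8b}.
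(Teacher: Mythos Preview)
Your argument is correct and rests on the same saddle-point idea as the paper: locate the dominant index $n^*\approx\log r/(-2\log q)$ and show that the full sum is within a bounded factor of the single term $t_{n^*}(r)$. The paper carries this out somewhat differently: it writes $r=q^{-2(m+\alpha)}$ with $m\in\Z$ and $0\le\alpha<1$, shifts the summation index by $m$, and after factoring out $q^{-m^2-2\alpha m}m^\kappa$ recognises the remaining sum as asymptotic to the classical theta value $\theta(q^{-2\alpha},q)$, which is bounded above and below as a continuous positive function of $\alpha\in[0,1)$. No ratio splitting is needed. The payoff of the paper's route is that it identifies the $\OO(1)$ term explicitly as $\log\!\left(q^{\alpha^2}\theta(q^{-2\alpha},q)\right)-\kappa\log(-2\log q)+o(1)$; this oscillation in $\alpha$ is precisely what Remark~\ref{remOo} exploits to argue that the $\OO(1)$ in~\eqref{8b} cannot be sharpened to $c+o(1)$. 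Your direct estimates give the bound but not this structure. On the other hand, your separate treatment of the $n\le 0$ terms as a uniform $\OO(1)$ contribution is arguably cleaner than the paper's opening line $M(r,\theta_\kappa(\cdot,q))=\sum_n n^\kappa q^{n^2}r^n$, which tacitly assumes non-negative coefficients.
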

\begin{proof}
Write $r=q^{-2(m+\alpha)}$ with $m\in\Z$ and $0\leq \alpha<1$.
Then
\begin{equation} \label{8g}
\begin{aligned}
M(r,\theta_\kappa (\cdot,q))
&
=\sum_{n=-\infty}^\infty n^\kappa  q^{n^2}r^n
=\sum_{n=-\infty}^\infty (n+m)^\kappa  q^{(n+m)^2}r^{n+m}
\\ &
=q^{m^2}r^{m}m^\kappa \sum_{n=-\infty}^\infty \left(1+\frac{n}{m}\right)^\kappa  q^{n^2+2nm} r^{n}
\\ &
=q^{-m^2-2\alpha m} m^\kappa 
\sum_{n=-\infty}^\infty \left(1+\frac{n}{m}\right)^\kappa  q^{n^2}q^{-2\alpha n}
\end{aligned}
\end{equation}
so that
\begin{equation} \label{8h}
\begin{aligned}
M\!\left(r,\theta_\kappa (\cdot,q)\right)
& \sim
q^{-m^2-2\alpha m} m^\kappa 
\sum_{n=-\infty}^\infty q^{n^2} q^{-2\alpha n}
=\exp\!\left( \frac{(\log r)^2}{-4\log q}\right) m^\kappa  q^{\alpha^2}
\theta\!\left(q^{-2\alpha},q\right)
\end{aligned}
\end{equation}
as $r\to\infty$ and hence $m\to\infty$.
Since 
\begin{equation} \label{8h1}
m= \frac{\log r}{-2\log q} -\alpha\sim \frac{\log r}{-2\log q}
\end{equation}
 we have 
\begin{equation} \label{8i}
\log(m^\kappa )=\kappa \log m=\kappa  \log\log r-\kappa\log(-2\log q)+o(1)
\end{equation}
and thus
\begin{equation} \label{8b1}
\begin{aligned}
\log M(r, \theta_\kappa (\cdot,q))
&=\frac{(\log r)^2}{-4\log q}+\kappa \log\log r
-\kappa \log(-2\log q) 
\\ &
\qquad
+\log\!\left( q^{\alpha^2}
\theta(q^{-2\alpha},q)\right)+o(1)
\end{aligned}
\end{equation}
as $r\to\infty$ so that the conclusion follows.
\end{proof}
\begin{remark} \label{remOo}
The term $q^{\alpha^2} \theta(q^{-2\alpha},q)$ occurring in~\eqref{8b1}
 shows that the term $\OO(1)$ in~\eqref{8b} cannot be replaced by $c+o(1)$
for some constant~$c$. This also shows that the term $\OO(1)$ in~\eqref{104b} 
cannot be improved to $c+o(1)$ with a constant~$c$.

It turns out that this is not a special property of solutions of~\eqref{101}.
In fact, a recent result of Hilberdink~\cite[Theorem~ 1.1]{Hilberdink2020} says
that if $h\colon [x_0,\infty)\to(0,\infty)$ is a twice continuously differentiable 
function satisfying
$h''(x)>0$ for $x\geq x_0$, $h''(x)\to C$ as $x\to\infty$ for some $C\geq 0$ 
and $h'(x)\to \infty$ as $x\to\infty$, then there is no entire function $f$ 
such that 
\begin{equation} \label{8i0}
\log M(r,f) =h(\log r)+ o(1) 
\end{equation}
as $r\to\infty$.
Choosing
\begin{equation} \label{6m}
h(x)= \frac{\sigma_k}{-2\log |q|}(x+\log\mu)^2+\kappa\log x +c
\end{equation}
with a constant~$c$, we see that there is no entire function $f$
satisfying~\eqref{104b} with $\OO(1)$ replaced by $c+o(1)$.
\end{remark} 
We shall also need the following two lemmas.
\begin{lemma} \label{la8}
Let $w_1,\dots,w_n\in\C\setminus\{0\}$ be distinct.
Then there exists $\eta>0$ such that
\begin{equation} \label{6d}
\left\|\left( \sum_{j=1}^n w_j^0 x_j,\sum_{j=1}^n w_j^1 x_j,
\dots, \sum_{j=1}^n w_j^{n-1} x_j\right)\right\|_\infty
\geq \eta \|(x_1,x_2,\dots,x_n)\|_\infty
\end{equation}
for all $(x_1,\dots,x_n)\in\C^n$.
\end{lemma}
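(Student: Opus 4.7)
The plan is to recognize the left-hand side as the infinity-norm of $Vx$, where $V$ is the Vandermonde matrix with entries $V_{ij}=w_j^{\,i}$ for $i\in\{0,1,\dots,n-1\}$ and $j\in\{1,\dots,n\}$, and $x=(x_1,\dots,x_n)^T$. Since the $w_j$ are assumed to be pairwise distinct, the standard Vandermonde determinant formula gives
\begin{equation}
\det V = \prod_{1\le i<j\le n}(w_j-w_i)\neq 0,
\end{equation}
so $V$ is invertible as a linear map $\C^n\to\C^n$. (The hypothesis $w_j\neq 0$ is not actually required for this step, but it is in the statement.)

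From invertibility, the conclusion is a soft compactness argument. The map $x\mapsto \|Vx\|_\infty$ is continuous on $\C^n$, and the unit sphere $S=\{x\in\C^n\colon\|x\|_\infty=1\}$ is compact. Hence this map attains its minimum on $S$, say at some point $x^*\in S$. Because $V$ is injective and $x^*\neq 0$, we have $Vx^*\neq 0$, so
\begin{equation}
\eta:=\min_{x\in S}\|Vx\|_\infty = \|Vx^*\|_\infty>0.
\end{equation}

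For an arbitrary nonzero $x\in\C^n$, apply this to $x/\|x\|_\infty\in S$ and use the homogeneity of both norms to obtain $\|Vx\|_\infty\ge \eta\|x\|_\infty$; the inequality is trivial when $x=0$. This gives~\eqref{6d}. There is really no obstacle here: the only substantive input is the nonvanishing of the Vandermonde determinant, and the rest is equivalence of any two norms on a finite-dimensional space applied to $\|\cdot\|_\infty$ and $x\mapsto\|Vx\|_\infty$.
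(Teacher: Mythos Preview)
Your proof is correct and follows essentially the same approach as the paper: both recognize the left-hand side as $Vx$ with $V$ the Vandermonde matrix, use distinctness of the $w_j$ to conclude $V$ is invertible, and deduce the lower bound. The only cosmetic difference is that the paper takes $\eta=1/\|V^{-1}\|$ directly (the operator norm of the inverse), whereas you obtain the same constant via a compactness argument on the unit sphere.
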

\begin{proof}
The vector on the left hand side is obtained by multiplying the one on
the right hand side by the Vandermonde matrix. The conclusion holds if
we take $1/\eta$ as the operator norm of the inverse of the Vandermonde matrix.
\end{proof}
\begin{lemma} \label{la11}
Let $A,B\colon [x_0,\infty)\to\R$ be convex. Suppose that 
$B$ is differentiable and satisfies 
\begin{equation} \label{Bx+1}
B'(x+1)= B'(x)+\OO(1)
\end{equation}
as $x\to\infty$.
Suppose also that there exists an increasing sequence $(x_k)$ in $[x_0,\infty)$ 
such that $x_k\to\infty$, $x_{k+1}=x_k+\OO(1)$ and $A(x_k)=B(x_k)+\OO(1)$ 
as $k\to\infty$. Then  $A(x)=B(x)+\OO(1)$ as $x\to\infty$.
\end{lemma}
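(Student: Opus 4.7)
My plan is to establish the two one-sided bounds $A(x) \leq B(x) + \OO(1)$ and $A(x) \geq B(x) - \OO(1)$ separately. A preliminary step, used in both, is to thin $(x_k)$ to a subsequence $(y_j)$ whose gaps are bounded both above \emph{and below}: starting from some $y_0 = x_{k_0}$, let $y_{j+1}$ be the first $x_k$ with $x_k \geq y_j + 1$. Since the original gaps satisfy $x_{k+1} - x_k \leq C$ for some constant $C$, this gives $1 \leq y_{j+1} - y_j \leq 1 + C$, and the hypothesis $A(y_j) = B(y_j) + \OO(1)$ is inherited by the subsequence. Throughout, I will use~\eqref{Bx+1} in its iterated form $B'(x+h) - B'(x) = \OO(1)$ for bounded $h \geq 0$, which follows by induction from~\eqref{Bx+1} together with the monotonicity of $B'$ (a consequence of convexity of $B$).

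For the upper bound, fix $x \in [y_j, y_{j+1}]$. Convexity of $A$ yields
\begin{equation}
A(x) \leq \frac{y_{j+1} - x}{y_{j+1} - y_j} A(y_j) + \frac{x - y_j}{y_{j+1} - y_j} A(y_{j+1}),
\end{equation}
and substituting $A(y_i) = B(y_i) + \OO(1)$ shows that $A(x)$ is bounded above by the linear interpolation of $B$ at $y_j, y_{j+1}$ plus $\OO(1)$. A standard computation based on writing $B(y_{j+1}) - B(y_j) = \int_{y_j}^{y_{j+1}} B'(t)\, dt$ and exploiting monotonicity of $B'$ bounds this linear interpolation minus $B(x)$ by $(y_{j+1} - y_j)\bigl(B'(y_{j+1}) - B'(y_j)\bigr)$, which is $\OO(1)$ by the previous paragraph.

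For the lower bound, convexity of $B$ (chord above graph) gives $B(x)$ bounded above by the linear interpolation of $B$ at $y_j, y_{j+1}$, hence by the linear interpolation of $A$ plus $\OO(1)$. What remains is to show that the linear interpolation of $A$ exceeds $A(x)$ by only $\OO(1)$. Denoting by $A'_+$ the right derivative of $A$ (non-decreasing, by convexity), the integration argument above applied to $A$ shows that this excess is at most $(y_{j+1} - y_j)\bigl(A'_+(y_{j+1}) - A'_+(y_j)\bigr)$. To bound the difference of right derivatives I would sandwich each $A'_+(y_i)$ between neighboring secant slopes, using the elementary convex inequalities $\frac{A(y_i) - A(y_{i-1})}{y_i - y_{i-1}} \leq A'_+(y_i) \leq \frac{A(y_{i+1}) - A(y_i)}{y_{i+1} - y_i}$. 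Using $A(y_i) = B(y_i) + \OO(1)$ and the crucial lower bound $y_{i+1} - y_i \geq 1$ to absorb the $\OO(1)$ errors into the difference quotients, each such secant slope equals the corresponding secant slope of $B$ up to $\OO(1)$, and convexity of $B$ bounds the $B$-secant slopes between values of $B'$ at uniformly close points, which differ by $\OO(1)$.

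The main obstacle is precisely the lower bound: unlike for the upper bound, we have no direct ``second derivative'' control on $A$, and this must be manufactured from the given control on $B$ via the matching condition at the $y_j$'s. The thinning step producing gaps bounded \emph{below} is essential here, since otherwise the $\OO(1)$ errors in $A(y_i) = B(y_i) + \OO(1)$ would be amplified by division by vanishing gaps when passing to secant slopes.
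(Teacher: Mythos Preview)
Your argument is correct, and the key technical step---thinning to gaps bounded below by $1$, then sandwiching $A'_+(y_j)$ between secant slopes of $A$, converting these to secant slopes of $B$ up to $\OO(1)$, and finally to values of $B'$---is the same one the paper uses. The organizational difference is that you split into an upper bound (convexity of $A$ plus chord interpolation) and a lower bound (convexity of $B$ plus a bound on the interpolation error for $A$), whereas the paper goes through the derivative: it first shows $A'_+(x_k)=B'(x_k)+\OO(1)$ at the sample points via the same secant-slope sandwich you employ, then uses monotonicity of both $A'_+$ and $B'$ together with \eqref{Bx+1} to extend this to $A'_+(x)=B'(x)+\OO(1)$ for all large $x$, and finally integrates from $x_k$ to $x$. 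The paper's route is slightly more economical, since proving $A'_+=B'+\OO(1)$ once gives both inequalities simultaneously and avoids separate treatment of the chord-interpolation errors for $A$ and $B$; your route, on the other hand, makes the role of convexity of each function more explicit and never needs the global statement $A'_+(x)=B'(x)+\OO(1)$ at non-sample points.
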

\begin{proof}
Let $A'$ be the right derivative of $A$. This exists and is non-decreasing
since $A$ is convex.
We may assume that $x_{k+1}-x_k\geq 1$ for all $k$, since this may be achieved by passing
to a subsequence.  Then 
\begin{equation} \label{8f}
A'(x_k)\geq   \frac{A(x_k)-A(x_{k-1})}{x_k-x_{k-1}}
= \frac{B(x_k)-B(x_{k-1})}{x_k-x_{k-1}}+\OO(1)\geq B'(x_{k-1})+\OO(1).
\end{equation}
By~\eqref{Bx+1} we have
\begin{equation} \label{8f1}
B'(x_{k-1})=B'(x_k)+\OO(1).
\end{equation}
Thus $A'(x_k)\geq B'(x_k)+\OO(1)$. 

An analogous argument yields that $A'(x_k)\leq B'(x_k)+\OO(1)$.
Indeed, this follows from
\begin{equation} \label{8f2}
A'(x_k)\leq   \frac{A(x_{k+1})-A(x_{k})}{x_{k+1}-x_{k}}
= \frac{B(x_{k+1})-B(x_{k})}{x_{k+1}-x_{k}}+\OO(1)\leq B'(x_{k+1})+\OO(1)
\end{equation}
and~\eqref{8f1} with $k$ replaced by $k+1$.

Thus $A'(x_k)= B'(x_k)+\OO(1)$.
Since both $A'$ and $B'$ are non-decreasing this yields together with~\eqref{8f1} that $A'(x)=B'(x)+\OO(1)$
as $x\to\infty$.
For $x\geq x_1$ we choose $k$ such that $x_k\leq x<x_{k+1}$. Then
\begin{equation} \label{8e}
A(x)=A(x_k)+\int_{x_k}^x A'(t)dt 
=B(x_k)+\int_{x_k}^x B'(t)dt +\OO(1)
=B(x)+\OO(1)
\end{equation}
as $x\to\infty$.
\end{proof}
\begin{remark}
We do not really require that $B$ is differentiable. It suffices to assume that the right (or left)
derivative $B'$ of $B$ satisfies~\eqref{Bx+1}.
\end{remark}
\begin{proof}[Proof of Theorem~\ref{growth}]
By~\eqref{asymp-coeff}, there exist $C>0$ such that
\begin{equation} \label{6e}
\left|\alpha_n\rho^{-N n^2}\right| \leq Cn^\kappa\mu^n
\quad\text{for}\ n\in\N.
\end{equation}
It follows that
\begin{equation} \label{6e1}
\begin{aligned} 
M(r,f)
&\leq \sum_{n=0}^\infty |\alpha_n| r^n 
\leq |\alpha_0| +\sum_{n=1}^\infty Cn^\kappa\mu^n |\rho|^{N n^2} r^n 
\\ &
\leq |\alpha_0| +C \sum_{n=-\infty}^\infty n^\kappa |\rho|^{N n^2} (\mu r)^n 
=|\alpha_0|+CM\!\left(\mu r,\theta_\kappa\!\left(\cdot,|\rho|^N\right)\right).
\end{aligned} 
\end{equation}
Using~\eqref{defrho} and~\eqref{MN} we find that
\begin{equation} \label{6e2}
|\rho|^N=|q|^{N/(2M)}=|q|^{1/2\sigma_k}.
\end{equation}
Thus the last inequality can also be written as
\begin{equation} \label{6f}
M(r,f) \leq 
|\alpha_0|+ CM\!\left(\mu r,\theta_\kappa\!\left(\cdot,|q|^{1/2\sigma_k}\right)\right).
\end{equation}
Lemma~\ref{la10} now yields that
\begin{equation} \label{104b1}
\log M(r,f)\leq \frac{\sigma_k}{-2\log |q|}\left(\log \mu r\right)^2+\kappa\log\log r + \OO(1).
\end{equation}

In the opposite direction, let $s$ be the number of polynomials $Q_j$ which
have degree $\kappa$. Without loss of generality we may assume that $\deg Q_j=\kappa$ for
$1\leq j\leq s$, say $Q_j(z)\sim \gamma_j z^\kappa$ as $z\to\infty$, with $\gamma_j\neq 0$.
Let $n\in\N$. Applying Lemma~\ref{la8} with $w_j=\lambda_j$ and $x_j=\lambda_j^n \gamma_j$
we find that there exists $i\in \{0,1,\dots,s-1\}$ such that
\begin{equation} \label{6g}
\begin{aligned}
\left|\sum_{j=1}^s \lambda_j^{n+i}\gamma_j \right| 
&=
\left|\sum_{j=1}^s w_j^i x_j \right|
\geq \eta \|(x_1,\dots,x_s)\|_\infty
= \eta \|(\gamma_1,\dots,\gamma_s)\|_\infty \mu^n
\geq 2 c \mu^{n+i},
\end{aligned}
\end{equation}
with 
\begin{equation} \label{6gx}
c:=\frac12\eta \|(\gamma_1,\dots,\gamma_s)\|_\infty\min\{\mu^{1-s},\mu^{s-1}\}.
\end{equation}
It follows that there exists an increasing sequence $(n_l)$ 
satisfying $n_{l+1}\leq n_l+s$ such that
\begin{equation} \label{6g2}
\left|\sum_{j=1}^s \lambda_j^{n_l}\gamma_j \right| 
\geq 2 c \mu^{n_l}.
\end{equation}
By~\eqref{asymp-coeff} we have
\begin{equation} \label{6g1a}
\alpha_{n_l}\rho^{-N n_l^2}= n_l^\kappa \sum_{j=1}^s \lambda_j^{n_l}\gamma_j
+\OO\!\left(n_l^{\kappa-1}\mu^{n_l}\right).
\end{equation}
Together with~\eqref{6g2} this yields that
\begin{equation} \label{6g1}
\left|\alpha_{n_l}\rho^{-N n_l^2}\right|
\geq  cn_l^\kappa\mu^{n_l}
\end{equation}
and thus, by~\eqref{6e2},
\begin{equation} \label{6g3}
|\alpha_{n_l}|\geq c |\rho|^{N {n_l}^2} n_l^\kappa\mu^{n_l} 
= c |q|^{n_l^2/(2\sigma_k)} n_l^\kappa\mu^{n_l} ,
\end{equation}
if $l$ is sufficiently large.

For $n\in\N$ and $r>0$ we have $M(r,f)\geq |\alpha_n|r^n$.
Choosing
\begin{equation} \label{6k}
r=r_l:=\frac{1}{\mu}\exp\!\left(\frac{-\log|q|}{\sigma_k}n_l\right)
\quad\text{so that}\quad
n_l=\frac{\sigma_k\log(\mu r_l)}{-\log|q|}
\end{equation}
it follows from~\eqref{6g3} that
\begin{equation} \label{6l}
\begin{aligned} 
\log M(r_l,f)
&\geq  \log|\alpha_{n_l}|+{n_l}\log r_l
\\ &
\geq \frac{n_l^2}{2\sigma_k}\log|q|+\kappa\log n_l+n_l\log \mu +n_l\log r_l+\log c
\\ &
= \frac{n_l^2}{2\sigma_k}\log|q|+\kappa\log n_l+n_l\log (\mu r_l) +\log c .
\end{aligned} 
\end{equation}
Inserting the value of $n_l$ given by~\eqref{6k} yields that
\begin{equation} \label{6l1}
\begin{aligned} 
\log M(r_l,f)
&
\geq \frac{\sigma_k}{-2\log |q|}\left(\log \mu r_l\right)^2
+\kappa\log \!\left( \frac{\sigma_k\log(\mu r_l)}{-\log|q|}\right) + \log c 
\\ &
= \frac{\sigma_k}{-2\log |q|}\left(\log \mu r_l\right)^2+\kappa\log\log r_l + \OO(1).
\end{aligned} 
\end{equation}
Since $\log M(r,f)$ is convex in $\log r$, the conclusion now follows from~\eqref{104b1},
\eqref{6l1} and Lemma~\ref{la11}, 
applied with
\begin{equation} \label{6m0}
A(x):=\log M\!\left(e^x,f\right)
, \quad
B(x):= \frac{\sigma_k}{-2\log |q|}(x+\log\mu)^2+\kappa\log x
\end{equation}
and $x_l:=\log r_l$.  Note that $x_{l+1}=x_l+\OO(1)$ by~\eqref{6k} since $n_{l+1}\leq n_l+s$.
\end{proof}

\section{Proof of Theorem~\ref{zeros}} \label{proof-zeros}
We first consider the case that the roots of $P_k$ of modulus $\mu$ are simple.
In this case the reasoning is similar to the one in~\cite{Bergweiler2003},
with various modifications though.
Again we will use the following lemma~\cite[Lemma~3]{Bergweiler2003}
which says that the theta function is large except in small neighborhoods of its zeros.
\begin{lemma} \label{asymptheta}
Suppose that $0<|q|<1$ and  that $z\in\C$, $|z|>1$.
Define $\nu\in \N$ by $|q|^{2-2\nu}<|z|\leq |q|^{-2\nu}$.
Then, uniformly as $z\to\infty$,
\begin{equation} \label{asymptheta1}
\log|\theta(z,q)|=\frac{(\log |z|)^2}{-4\log |q|}
+\log |1+q^{2\nu-1}z|+\OO(1).
\end{equation}
\end{lemma}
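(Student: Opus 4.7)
The plan is to apply Jacobi's triple product identity~\eqref{jacobi} to the function $\theta(z,q)$ and carefully estimate the three factors. Setting $s:=-\log|q|>0$ and $L:=\log|z|$, the hypothesis $|q|^{2-2\nu}<|z|\leq|q|^{-2\nu}$ becomes $L\in((2\nu-2)s,2\nu s]$, which is precisely the shift that makes $(1+q^{2\nu-1}z)$ the ``distinguished'' factor that is allowed to be small. I would split
\begin{equation}
\log|\theta(z,q)| = \sum_{n=1}^\infty \log|1-q^{2n}|
+\sum_{n=1}^\infty \log\!\left|1+\frac{q^{2n-1}}{z}\right|
+\sum_{n=1}^\infty \log|1+q^{2n-1}z|
\end{equation}
and treat each sum separately, the goal being to concentrate all of the ``large'' behaviour on a single factor plus the quadratic term.

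The first sum depends only on $q$ and is a constant. For the second sum, the bound $|q^{2n-1}/z|\leq|q|^{2n+2\nu-3}$ for $n\geq 1$ shows that each term is $\OO(|q|^{2n+2\nu-3})$ and the sum is $\OO(|q|^{2\nu-1})=\OO(1)$ uniformly as $\nu\to\infty$. For the third sum I would split the tail $n>\nu$ off; for $n\geq\nu+1$ the inequality $|q^{2n-1}z|\leq|q|^{2n-1-2\nu}$ again yields a geometrically convergent bounded contribution. What remains is the finite product $\prod_{n=1}^{\nu}(1+q^{2n-1}z)$, and within it I would isolate the single factor $(1+q^{2\nu-1}z)$ corresponding to $n=\nu$.

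The main computation is then the asymptotics of $\sum_{n=1}^{\nu-1}\log|1+q^{2n-1}z|$. For $1\leq n\leq\nu-1$ one has $|q^{2n-1}z|\geq|q|^{-1}>1$ and more precisely $|1/(q^{2n-1}z)|\leq|q|^{2\nu-2n-1}$, so $\log|1+q^{2n-1}z|=\log|q^{2n-1}z|+\OO(|q|^{2\nu-2n-1})$; summing the error gives a convergent geometric series and hence $\OO(1)$. The leading term is then
\begin{equation}
\sum_{n=1}^{\nu-1}\bigl((2n-1)\log|q|+\log|z|\bigr) = (\nu-1)L-(\nu-1)^2s.
\end{equation}
It remains to verify the identity $(\nu-1)L-(\nu-1)^2 s = L^2/(4s)+\OO(1)$, which is equivalent to $(L-2(\nu-1)s)^2=\OO(1)$; but $L-2(\nu-1)s\in(0,2s]$ by our definition of $\nu$, so this is automatic with an absolute constant depending only on $q$. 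Recombining everything produces the claimed formula.

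The main obstacle is really just bookkeeping: making sure that all of the $\OO(1)$ error estimates are genuinely uniform in~$\nu$ (equivalently, uniform as $z\to\infty$). The delicate points are the lower bound $\log|1+q^{2n-1}z|\geq\log|q^{2n-1}z|+\log(1-|q|^{2\nu-2n-1})$, which requires the exponent $2\nu-2n-1$ to remain positive so that the logarithm stays bounded below, and checking the edge case where $|z|$ is close to $|q|^{2-2\nu}$ or $|q|^{-2\nu}$ so that the quantity $(L-2(\nu-1)s)^2$ attains its maximum on the interval. Once these uniformities are established, no additional input is needed beyond~\eqref{jacobi}.
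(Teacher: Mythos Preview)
Your argument is correct and complete: the Jacobi triple product reduces the problem to estimating three infinite products, and your splitting of the third product at $n=\nu$ (isolating the single ``critical'' factor $(1+q^{2\nu-1}z)$, estimating the tail $n>\nu$ geometrically, and extracting $\sum_{n=1}^{\nu-1}\log|q^{2n-1}z|=(\nu-1)L-(\nu-1)^2s$ from the head) is exactly right, as is the observation that $(\nu-1)L-(\nu-1)^2s-L^2/(4s)=-\bigl(L-2(\nu-1)s\bigr)^2/(4s)\in[-s,0]$. The uniformity in $\nu$ that you flag is genuinely the only delicate point, and your bounds handle it.

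There is no comparison to be made with the paper's own proof: the paper does not prove this lemma at all, but quotes it verbatim as \cite[Lemma~3]{Bergweiler2003}. Your proof via the triple product identity is in fact essentially the argument given in that reference, so you have independently reconstructed the original proof.
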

We note that the case $\kappa=0$ of Lemma~\ref{la10} follows from Lemma~\ref{asymptheta}.

\begin{proof}[Proof of Theorem~\ref{zeros} in the case of simple roots]
Since we assume that the roots of $P_k$ of modulus $\mu$ are simple,
the polynomials $Q_j$ in~\eqref{asymp-coeff} are constant. Thus
\begin{equation} \label{asymp-coeff1}
\alpha_n\rho^{-N n^2}
=\sum_{\{j\colon |\lambda_j|=\mu\}} \gamma_j\lambda_j^n +\OO\!\left(\delta^n\right)
\end{equation}
with certain constants $\gamma_j$ that do not all vanish.
With $\lambda:=\lambda_1$ and $\omega_j:=\lambda_j/\lambda$ we have
\begin{equation} \label{asymp-coeff2}
\alpha_n\rho^{-N n^2}
=\sum_{\{j\colon |\lambda_j|=\mu\}} \gamma_j\omega_j^n \lambda^n+\OO\!\left(\delta^n\right).
\end{equation}
Put 
\begin{equation} \label{defM0N0}
M_0:=LM
\quad\text{and}\quad
N_0:=LN. 
\end{equation}
By hypothesis, we have $\omega_j^{M_0}=1$ for all~$j$. Hence there exists 
$(\eta_0,\dots,\eta_{M_0-1})\in\C^{M_0}\setminus \{(0,\dots,0)\}$ such that if 
$n\equiv r \pmod {M_0}$, then 
\begin{equation} \label{5a}
\sum_{\{j\colon |\lambda_j|=\mu\}} \gamma_j\omega_j^n = \eta_r .
\end{equation}
As in~\cite[Section~4]{Bergweiler2003} we shall compare $f$ with
\begin{equation}
\label{defF}
F(z):= \sum_{n=-\infty}^\infty \eta_{r_n}\rho^{N n^2}\lambda^n  z^n,
\end{equation}
where $r_n\in\{0,1,\dots,M_0-1\}$ is chosen such that $n\equiv r_n \pmod{M_0}$.
We shall see that the zeros of $F$ lie on certain geometric progressions.
The idea is to prove that the zeros of $f$ are close to those of~$F$.

Note that, by~\eqref{asymp-coeff2}, the Taylor coefficients $u_n$ of the difference
\begin{equation}
\label{defR}
R(z):=f(z)-F(z)= \sum_{n=-\infty}^\infty u_n  z^n
\end{equation}
satisfy 
\begin{equation}
\label{u_n=O}
u_n=\OO\!\left( |\rho|^{N n^2} \delta^n\right)
\end{equation}
as $n\to \infty$, and thus are small compared to those of~$F$.

The advantage of considering the function $F$ instead of $f$ is that it satisfies the 
simple functional equation
\begin{equation} \label{equationF}
F(z)=Az^{M_0} F\!\left(q^{N}z\right)
\end{equation}
with 
\begin{equation} \label{defA}
A:=\lambda^{M_0} \rho^{NM_0^2}.
\end{equation}
This implies that $F$ can be expressed as product of theta functions.
In fact, it was shown in~\cite[Theorem~4]{Bergweiler2003} that if $F$ satisfies~\eqref{equationF},
$F(z)\not\equiv 0$, and $p\in\C$ is chosen with 
\begin{equation} \label{defp}
p^2=q^{N_0}, 
\end{equation}
then there exist $C$ in $\C\setminus\{0\}$ and $z_1,z_2,\dots,z_{M_0}\in\C\setminus\{0\}$ satisfying
\begin{equation} \label{A=prod}
\prod_{l=1}^{M_0}z_l=\frac{(-1)^{M_0}}{A}
\end{equation}
such that
\begin{equation} \label{F=prod}
F(z)=C\prod_{l=1}^{M_0} \theta\!\left(-\frac{z}{pz_l},p\right).
\end{equation}
Jacobi's triple product identity~\eqref{jacobi}
implies that the zeros of $F$ are given by $M_0$ geometric progressions.
As mentioned, we will prove that the zeros of $f$ are close to these geometric progressions
since $f$ and $F$ are close.

In order to do so we proceed as in~\cite{Bergweiler2003} and note first that
\begin{equation} \label{9a}
\begin{aligned}
\log |R(z)| 
&\leq \log M\!\left(\delta |z|,\theta(\cdot,|\rho|^N)\right) +\OO(1)
\\ &
\leq \frac{(\log\delta |z|)^2}{-4\log\!\left(|\rho|^N\right)}+\OO(1)
= \frac{\sigma_k (\log\delta |z|)^2}{-2\log|q|}+\OO(1)
\\ &
= \frac{\sigma_k}{-2\log|q|}(\log|z|)^2+\frac{\sigma_k\log\delta}{-\log|q|}\log|z|+\OO(1)
\end{aligned}
\end{equation}
by~\eqref{u_n=O}, Lemma~\ref{la10} and~\eqref{6e2}.
For $1\leq l\leq M_0$ we now choose the integer $\nu_l$ such that
\begin{equation} \label{9a1}
|p|^{3-2\nu_l}\leq \left|\frac{z}{z_l}\right|< |p|^{1-2\nu_l}
\end{equation}
and thus $|p|^{2-2\nu_l}\leq |z/pz_l|< |p|^{-2\nu_l}$. 
Putting $n_l:=\nu_l-1$ we deduce, using~\eqref{F=prod} and Lemma~\ref{asymptheta}, that
\begin{equation} \label{9b}
\begin{aligned}
\log |F(z)| 
&= 
 \sum_{l=1}^{M_0} \log\left|\theta\!\left( -\frac{z}{pz_l},p\right)\right|+\log|C|
\\ &
= \sum_{l=1}^{M_0} \frac{(\log|z/pz_l|)^2}{-4\log|p|}
+ \sum_{l=1}^{M_0} \log\left|1-\frac{p^{2n_l}z}{z_l}\right| +\OO(1).
\end{aligned}
\end{equation}
Now, by~\eqref{defrho}, \eqref{defA}, \eqref{defp} and~\eqref{A=prod},
\begin{equation} \label{9c}
\begin{aligned}
\sum_{l=1}^{M_0}\log|pz_l|
&=
M_0\log|p|+
\sum_{l=1}^{M_0}\log|z_l|
=M_0\log|p|-\log |A|
\\ &
=M_0\log|p|-M_0\log \mu-NM_0^2\log|\rho|
=-M_0\log \mu
\end{aligned}
\end{equation}
and thus
\begin{equation} \label{9d}
\begin{aligned}
\sum_{l=1}^{M_0}\left(\log\!\left|\frac{z}{pz_l}\right|\right)^2
&=
\sum_{l=1}^{M_0}\left(\log|z|-\log|pz_l|\right)^2
\\ &
=M_0(\log|z|)^2-
2\left( \sum_{l=1}^{M_0}\log|pz_l|\right)\log|z| +\OO(1)
\\ &
=M_0(\log|z|)^2+(2 M_0\log\mu)\log|z| +\OO(1).
\end{aligned}
\end{equation}
Since 
\begin{equation} \label{9d1}
\frac{M_0}{4\log|p|}=\frac{M_0}{2N_0\log|q|}=\frac{M}{2N\log|q|}=\frac{\sigma_k}{2\log|q|} 
\end{equation}
by~\eqref{defp}, \eqref{defM0N0} and~\eqref{MN}
we deduce from~\eqref{9d}  that~\eqref{9b} takes the form
\begin{equation} \label{9e}
\begin{aligned}
\log |F(z)| 
&= 
\frac{\sigma_k}{-2\log|q|} (\log|z|)^2 + \frac{\sigma_k\log\mu}{-\log|q|} \log|z|
\\ & \qquad
+ \sum_{l=1}^{M_0} \log \left|1-\frac{p^{2n_l}z}{z_l}\right|
+\OO(1).
\end{aligned}
\end{equation}
Combining~\eqref{9a} and~\eqref{9e} yields that
\begin{equation} \label{log|R}
\log |R(z)|
\leq \log |F(z)| - \sum_{l=1}^{M_0} \log \left|1-\frac{p^{2n_l}z}{z_l}\right|
-C_1\log |z| + C_2,
\end{equation}
with
\begin{equation} \label{C_1=}
C_1 :=\frac{\sigma_k\log(\delta/\mu)}{\log|q|}
=\frac{M\log(\delta/\mu)}{N\log|q|}
\end{equation}
and a further constant $C_2$.
This corresponds to~\cite[(4.18)]{Bergweiler2003}.
However, in the situation of~\cite{Bergweiler2003} we had $\delta=\rho^2\mu$ and thus $C_1=1/N$.

If $\zeta$ is a zero of $f$, then $|F(\zeta)|=|R(\zeta)|$.
As in~\cite{Bergweiler2003} it then follows from from~\eqref{log|R} and~\eqref{C_1=} that 
\begin{equation} \label{extra1}
\sum_{l=1}^{M_0}
\log \left|1-\frac{p^{2n_l}\zeta}{z_l}\right|
\leq -C_1\log |\zeta| +C_2.
\end{equation}
Hence there exists $l\in \{1,2,\dots,M_0\}$ with
\begin{equation} \label{extra2}
\log \left|1-\frac{p^{2n_l}\zeta}{z_l}\right|
\leq -\frac{C_1}{M_0}\log |\zeta| +\frac{C_2}{M_0}.
\end{equation}
This implies that
\begin{equation} \label{5b}
 \left|1-\frac{p^{2n_l}\zeta}{z_l}\right|=\OO\!\left(|\zeta|^{-C_1/M_0}\right)
\end{equation}
so that $\zeta\sim z_lp^{-2n_l}=z_lq^{-N_0n_l}$ as $\zeta\to\infty$. In fact,
\begin{equation} \label{5c}
\zeta=z_lq^{-N_0n_l}\left(1+\OO\!\left(|q|^{N_0 n_l C_1/M_0} \right)\right).
\end{equation}
Using~\eqref{C_1=} and~\eqref{defM0N0} we see that 
\begin{equation} \label{5d}
|q|^{N_0  C_1/M_0}=\exp\!\left(\frac{N_0C_1}{M_0}\log|q|\right)
=\exp\!\left(\frac{N_0 M}{M_0 N}\log\frac{\delta}{\mu}\right)
=\exp\!\left(\log\frac{\delta}{\mu}\right)
=\frac{\delta}{\mu}.
\end{equation}
Thus~\eqref{5c} may also be written in the form 
\begin{equation} \label{5e}
\zeta=z_lq^{-N_0n_l}\left(1+\OO\!\left(\!\left(\frac{\delta}{\mu} \right)^{\!n_l}\right)\!\right).
\end{equation}
Recalling that $z_l p^{-2n_l}= z_lq^{-N_0n_l}$ is a zero of $F$
we thus see that every zero of $f$ of large modulus is close to a zero of~$F$.

In turn, we will see that near every zero of $F$ of large modulus there is indeed a zero of~$f$.
Let $m_l$ denote the cardinality of the set of all $j\in\{1,\dots,M_0\}$ for which 
$z_jp^{-2n_j}=z_lp^{-2n_l}$. Then $F$ has a zero of multiplicity $m_l$ at $z_l p^{-2n_l}$.
As in~\cite{Bergweiler2003} we will use Rouch\'e's theorem to show that 
$f$ has $m_l$ zeros near $z_l p^{-2n_l}$.

In order to do so we note 
that if $j\in\{1,\dots,M_0\}$ is such that $z_jp^{-2n_j}\neq z_lp^{-2n_l}$, then
$z_jp^{-2n_j}$ and $z_l p^{-2n_l}$ lie on different geometric progressions.
Thus there exists $\eta>0$ such that 
\begin{equation} \label{5e5}
\left|\frac{z_jp^{-2n}}{z_l p^{-2n_l}}-1\right|\geq\frac{\eta}{|z_l|} 
\end{equation}
and hence
\begin{equation} \label{5e6}
\left|z_jp^{-2n}-z_l p^{-2n_l}\right|\geq\eta |p|^{-2n_l}
\end{equation}
for all such $j$ and all $n\in\Z$.
We conclude that if $0<\varepsilon<\min\{\eta,1-|p|^2\}$, then
the disk of radius $\varepsilon|p|^{-2n_l}$ around the zero $z_lp^{-2n_l}$ of $F$ 
contains no other zeros of $F$.

We want to apply Rouch\'e's theorem to this disk and thus have to show
\begin{equation} \label{5e1}
|F(z)-f(z)|<|F(z)|
\quad\text{for}\ \left|z-z_lp^{-2n_l}\right|= \varepsilon|p|^{-2n_l}.
\end{equation}
Recalling that $R(z)=f(z)-F(z)$ we deduce from~\eqref{log|R} that~\eqref{5e1} holds if 
\begin{equation} \label{5e2}
- \sum_{j=1}^{M_0} \log \left|1-\frac{p^{2n_j}z}{z_j}\right|< C_1\log |z| - C_2
\quad\text{for}\ \left|z-z_lp^{-2n_l}\right|= \varepsilon|p|^{-2n_l}.
\end{equation}

To prove~\eqref{5e2} we suppose now that
\begin{equation} \label{5e3}
\left|z-z_lp^{-2n_l}\right|= \varepsilon|p|^{-2n_l}.
\end{equation}
If $j\in\{1,\dots,M_0\}$ is such that $z_jp^{-2n_j}=z_lp^{-2n_l}$, then this implies that 
\begin{equation} \label{5e4}
 \left|1-\frac{p^{2n_j}z}{z_j}\right|
= \left|1-\frac{p^{2n_l}z}{z_l}\right|
= \frac{|p|^{2n_l}}{|z_l|} \left|z_lp^{-2n_l}-z\right|= \frac{\varepsilon}{|z_l|}.
\end{equation}
If $j\in\{1,\dots,M_0\}$ is such that $z_jp^{-2n_j}\neq z_lp^{-2n_l}$, then~\eqref{5e6}
and~\eqref{5e3} yield that
\begin{equation} \label{5e7}
\begin{aligned} 
\left|1-\frac{p^{2n_j}z}{z_j}\right|
&=\frac{|p|^{2n_j}}{|z_j|}\left| z_jp^{-2n_j}-z\right|
\\ &
\geq \frac{|p|^{2n_j}}{|z_j|} 
\left( \left| z_jp^{-2n_j}- z_l p^{-2n_l} \right| -\left|z-z_lp^{-2n_l}\right| \right)
\\ &
\geq \frac{|p|^{2n_j}}{|z_j|} (\eta-\varepsilon) |p|^{-2n_l}
= \frac{|p|^{2(n_j-n_l)}}{|z_j|} (\eta-\varepsilon).
\end{aligned} 
\end{equation}
It follows from~\eqref{9a1} that 
\begin{equation} \label{5e8}
|p|^{2(n_j-n_l)} \frac{|z_l|}{|z_j|}
=\frac{|p|^{2n_j}|z_l|}{|p|^{2n_l} |z_j|}
=\frac{|p|^{2\nu_j}|z_l|}{|p|^{2\nu_l} |z_j|}
\geq \frac{|p^{3}z|}{|pz|}
=|p|^2.
\end{equation}
Together with the previous inequality we thus find that 
if $z_jp^{-2n_j}\neq z_lp^{-2n_l}$ and $z$ satisfies~\eqref{5e3},
then
\begin{equation} \label{5e9}
\left|1-\frac{p^{2n_j}z}{z_j}\right|
\geq \frac{|p|^2}{|z_l|}(\eta-\varepsilon).
\end{equation}
Combining~\eqref{5e4} and~\eqref{5e9} we deduce that if $z$ satisfies~\eqref{5e3},
then
\begin{equation} \label{5e10}
- \sum_{j=1}^{M_0} \log\left|1-\frac{p^{2n_j}z}{z_j}\right|< 
- m_l \log\!\left(\frac{\varepsilon}{|z_l|}\right)
- (M_0-m_l) \log\!\left(\frac{|p|^2}{|z_l|}(\eta-\varepsilon)\right).
\end{equation}
It follows that~\eqref{5e2} and hence~\eqref{5e1} are satisfied if  $n_l$ and
hence $|z|$ are sufficiently large.
As explained above, Rouch\'e's theorem now yields that $f$ has $m_l$ zeros in the disk
of radius $\varepsilon |p|^{-2n_l}$ around the point $z_lp^{-2n_l}=z_l q^{-N_0n_l}$.

Moreover, the argument used to obtain~\eqref{5c} shows that all zeros of sufficiently
large modulus are contained in such a disk.
It follows that there are $m_l$ zeros $\zeta$ with the asymptotics~\eqref{5e},
and that all zeros of $f$ are covered by these asymptotics for some $l$.
This completes the proof of Theorem~\ref{zeros} in the case that the roots of $P_k$ of
modulus $\mu$ are all simple.
\end{proof}

\begin{remark} \label{rem1}
It was shown in~\cite{Bergweiler2003} that with $m_l$ as in the above proof, the error term in~\eqref{107b}
may be improved to
\begin{equation}\label{107c}
z_{n,\nu}= A_\nu q^{-N n}\left(1+\OO\!\left(|q|^{n/m_\nu}\right)\right).
\end{equation}
Similarly we could improve the error term in~\eqref{107a}.
\end{remark}

In order to consider the case where $P_k$ has a multiple root of modulus $\mu$,
we first prove some auxiliary results.
Let $\Delta:=\{z\in\C\colon|z|>1\}$.
\begin{lemma} \label{lemma-ff'}
Let $(a_n)$ be a sequence in $\Delta$ such that $a_n\to\infty$ as $n\to\infty$ and
$|a_n|\leq|a_{n+1}|$ for all~$n\in\N$.

Suppose that there exists $K>1$ and $N\in\N$ such that 
$|a_{n+N}|\geq K |a_n|$ for all~$n\in\N$; that is,
each annulus $\{z\colon r<|z|\leq Kr\}$ contains at most $N$ of the points~$a_n$.
Then the infinite product 
\begin{equation}\label{7a-1}
\prod_{n=1}^\infty \left( 1-\frac{z}{a_n}\right)
\end{equation}
converges locally uniformly in~$\C$.

Let $g\colon\Delta\to\C\setminus\{0\}$ be a holomorphic function  that extends 
meromorphically to $\Delta\cup\{\infty\}$ and 
let $G\colon\Delta\to\C$ be defined by
\begin{equation}\label{7a0}
G(z):=g(z) \prod_{n=1}^\infty \left( 1-\frac{z}{a_n}\right).
\end{equation}
Then for each $m\in\N$ there exists $R>1$ such that the zeros of $G^{(m)}$ 
in $\{z\colon |z|>R\}$ are given by a sequence $(b_n)_{n\geq n_0}$ satisfying
$b_n\sim a_{n}$ as $n\to\infty$.
More precisely, we have
\begin{equation}\label{7a1}
b_n=\left(1+\OO\!\left(\frac{1}{n}\right)\!\right) a_{n}.
\end{equation}
\end{lemma}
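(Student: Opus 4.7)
The plan is to split the proof into two parts: first, to establish convergence of the infinite product; and second, to localize the zeros of $G^{(m)}$ via a Rouch\'e-type analysis near each $a_k$. For convergence, iterating the density condition $|a_{n+N}|\geq K|a_n|$ gives $|a_n|\geq c K^{n/N}$ for some $c>0$, so that $\sum 1/|a_n|$ converges geometrically. Standard Weierstrass theory then yields normal convergence of $\prod(1-z/a_n)$ on compact subsets of $\C$, and thus $G$ is holomorphic on $\Delta$.

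For the localization, I would fix $m$ and work near $a_k$ for large $k$. Let $\mu_k$ denote the multiplicity of $a_k$ in the sequence $(a_n)_n$ and factor $G(z)=(z-a_k)^{\mu_k}H_k(z)$, where $H_k(z)=g(z)(-a_k)^{-\mu_k}\prod_{n:a_n\neq a_k}(1-z/a_n)$ is holomorphic and nonvanishing at $a_k$. Leibniz's rule gives
\begin{equation*}
G^{(m)}(z)=\sum_{j=0}^{\min(m,\mu_k)}\binom{m}{j}\frac{\mu_k!}{(\mu_k-j)!}(z-a_k)^{\mu_k-j}H_k^{(m-j)}(z).
\end{equation*}
If $\mu_k>m$, then $a_k$ is a zero of $G^{(m)}$ of multiplicity $\mu_k-m$; otherwise, $G^{(m)}(z)=0$ reduces near $a_k$ to a polynomial equation in $z-a_k$ of degree $\mu_k$ with leading coefficient $\binom{m}{\mu_k}\mu_k!H_k^{(m-\mu_k)}(a_k)\neq 0$. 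Either the implicit function theorem or a direct application of Rouch\'e's theorem in a disk $\{|z-a_k|<r_k\}$ with $r_k=C|a_k|/k$ then produces $\mu_k$ zeros of $G^{(m)}$ clustered near $a_k$.

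The quantitative rate $b_n=(1+\OO(1/n))a_n$ is obtained by estimating $H_k^{(j)}(a_k)/H_k(a_k)$. From the logarithmic derivative
\begin{equation*}
(\log H_k)'(a_k)=\frac{g'(a_k)}{g(a_k)}+\sum_{n:a_n\neq a_k}\frac{1}{a_k-a_n},
\end{equation*}
the first term is $\OO(1/|a_k|)$ since $g$ is meromorphic at $\infty$ and nonvanishing on $\Delta$. For the sum, I would partition the index set according to whether $|a_n|\leq|a_k|/2$, $|a_n|\geq 2|a_k|$, or $|a_n|$ is comparable to $|a_k|$. The density condition bounds the number of terms in each range, yielding $(\log H_k)^{(j)}(a_k)=\OO(k/|a_k|^j)$, from which Fa\`a di Bruno's formula gives $H_k^{(j)}(a_k)/H_k(a_k)=\OO((k/|a_k|)^j)$. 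Substituting into the Leibniz expansion yields the displacement bound $b_k-a_k=\OO(|a_k|/k)$, which is the required estimate since $k=n-n_0+1$ in the indexing.

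The main obstacle is handling the bounded number of terms in the logarithmic derivative sum with $|a_n|$ comparable to $|a_k|$, which may be arbitrarily large in magnitude if some $a_n$ happens to lie very close to $a_k$. In such cluster situations, one must pair the zeros $(b_n)$ with the $(a_n)$ within the cluster consistently, so that the claimed asymptotic holds for each pairing, and choose the Rouch\'e disks with radii adapted to the cluster geometry (rather than the uniform $|a_k|/k$ choice) in order to maintain uniform control and a correct zero count across the cluster.
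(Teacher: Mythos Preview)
Your convergence argument is fine and matches the paper's. The trouble is in the local Leibniz analysis. First, a slip: in the case $\mu_k\le m$, the term $\binom{m}{\mu_k}\mu_k!\,H_k^{(m-\mu_k)}(z)$ is the \emph{constant} term in $(z-a_k)$, not the leading one; the degree-$\mu_k$ contribution comes from $j=0$ and has coefficient $H_k^{(m)}(a_k)$, whose nonvanishing you have no reason to expect. More seriously, you correctly identify the real obstacle---nearby but unequal $a_n$'s making the logarithmic-derivative terms blow up---but you don't resolve it. Saying ``choose the Rouch\'e disks adapted to the cluster geometry'' is a plan, not a proof: you would need to show that on the boundary of a suitable cluster region the dominant term in your expansion really dominates uniformly, and that the zero count across the whole cluster matches. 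As written, the argument only works when the $a_n$ in the relevant annulus are well separated, and the lemma makes no such assumption.

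The paper avoids this entirely by working globally with $zG'(z)/G(z)$ rather than locally with a Leibniz expansion. Splitting the sum over $a_j$ into $|a_j|>K^2r$, $|a_j|\le K^{-2}r$, and the middle range, one finds $zG'(z)/G(z)=n(r)+\OO(1)$ for $r/K<|z|\le Kr$, plus the at most $4N$ middle terms. If $z$ stays at distance at least $Cr/n(r)$ from every middle $a_j$, those terms contribute at most $n(r)/4$, so $zG'(z)/G(z)=n(r)+S(z)$ with $|S(z)|\le n(r)/2$. This immediately forces every zero of $G'$ to lie within $Cr/n(r)$ of some $a_j$; and on the boundary of each connected component $U$ of the union of those disks, the inequality gives $|zG'(z)-n(r)G(z)|<|n(r)G(z)|$, so Rouch\'e yields that $G$ and $G'$ have the same number of zeros in $U$. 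The diameter bound on $U$ (at most $4N$ disks of radius $Cr/n(r)$) then gives the $\OO(1/n)$ error for free, with no need to distinguish simple points from clusters. The case $m>1$ follows by induction, noting that the monotonicity $|a_n|\le|a_{n+1}|$ relaxes to $|a_n|\le(1+o(1))|a_{n+1}|$ for the zero sequence of $G'$, which is harmless for the argument.
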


\begin{proof}[Proof of Lemma~\ref{lemma-ff'}]
Since $|a_{n+1}|\geq |a_n|$ and $|a_{n+N}|\geq K |a_n|$ for all~$n\in\N$ 
we find by induction that if $p\in\N$ and $q\in\{1,\dots,N\}$, then
\begin{equation}\label{7a2}
|a_{pN+q}|\geq |a_{pN+1}|\geq K^p |a_1|\geq K^p .
\end{equation}
It follows that
\begin{equation}\label{7a3}
\sum_{n=1}^\infty \frac{1}{|a_n|} 
= \sum_{q=1}^N\sum_{p=0}^\infty \frac{1}{|a_{pN+q}|} 
\leq \sum_{q=1}^N\sum_{p=0}^\infty \frac{1}{K^p} 
=\frac{NK}{K-1}<\infty.
\end{equation}
This implies that the infinite product~\eqref{7a-1} converges locally uniformly
in~$\C$.

Let $n(r)$ denote the number of points $a_j$ in the disk $\{z\colon |z|\leq r\}$.
By hypothesis  we have
\begin{equation}\label{7a4}
n(Kr)-n(r)\leq N
\end{equation}
for all $r>1$.

As $g$ extends meromorphically to $\Delta\cup\{\infty\}$,
there exist $c\in\C\setminus\{0\}$ and $M\in\Z$ such that $g(z)\sim cz^M$ as $z\to\infty$.
This implies that 
\begin{equation}\label{7a5}
 \frac{g'(z)}{g(z)} =\frac{M}{z}+\OO\!\left(\frac{1}{|z|^2}\right)
\end{equation}
as $z\to\infty$. Thus
\begin{equation}\label{7b}
\frac{zG'(z)}{G(z)}=M+\sum_{j=1}^\infty \frac{z}{z-a_j} +\OO\!\left(\frac{1}{|z|}\right).
\end{equation}
For $n\in\Z$ and $r>0$ we put
\begin{equation}\label{7b1}
X_n:=\left\{a_j\colon K^{4n-2}r<|a_j|\leq K^{4n+2}r\right\}.
\end{equation}
Note that the cardinality of $X_n$ is at most $4N$.
For $r/K<|z|\leq Kr$ we have
\begin{equation}\label{7c}
\begin{aligned}
\left|\sum_{|a_j|> K^2r}\frac{z}{z-a_j}\right|
&\leq
\sum_{|a_j|> K^2r} \frac{1}{|a_j/z|-1}
\\ &
= \sum_{n=1}^\infty \sum_{a_j\in X_n}\frac{1}{|a_j/z|-1}
\leq
\sum_{n=1}^\infty \frac{4N}{K^{4n-3}-1}<\infty
\end{aligned}
\end{equation}
and
\begin{equation}\label{7d}
\begin{aligned}
\left|\sum_{|a_j|\leq K^{-2}r}\frac{z}{z-a_j}-n(K^{-2}r)\right|
&=
\left|\sum_{|a_j|\leq K^{-2}r}\frac{a_j}{z-a_j}\right|
\leq
\sum_{|a_j|\leq K^{-2}r} \frac{1}{|z/a_j|-1}
\\ &
= \sum_{n=1}^\infty \sum_{a_j\in X_{-n}} \frac{1}{|z/a_j|-1}
\leq \sum_{n=1}^\infty \frac{4N}{K^{4n-3}-1}<\infty .
\end{aligned}
\end{equation}
It follows from the last three equations and~\eqref{7a4} that for  $r/K<|z|\leq Kr$ we have
\begin{equation}\label{7e}
\frac{zG'(z)}{G(z)}-\sum_{K^{-2}r<|a_j|\leq K^2r}\frac{z}{z-a_j} =M+n(K^{-2}r)+\OO(1)=n(r)+\OO(1)
\end{equation}
as $r\to\infty$. 

Put $C:=16NK$ and let $z$ be such that $r/K<|z|\leq Kr$ and
\begin{equation}\label{7f}
|z-a_j|\geq  \frac{Cr}{n(r)}
\quad \text{for all}\  j \ \text{with} \ 
\frac{r}{K^2}<|a_j|\leq K^2r.
\end{equation}
Since there are at most $4N$ points $a_j$ satisfying $K^{-2}r<|a_j|\leq K^2r$,
it follows that
\begin{equation}\label{7f1}
\begin{aligned}
\left|\sum_{K^{-2}r<|a_j|\leq K^2r}\frac{z}{z-a_j}\right|
&\leq
\sum_{K^{-2}r<|a_j|\leq K^2r}\frac{|z|}{|z-a_j|}
\\ &
\leq
\sum_{K^{-2}r<|a_j|\leq K^2r}\frac{Krn(r)}{Cr}\leq \frac{4NK}{C}n(r)=\frac14 n(r).
\end{aligned}
\end{equation}
For sufficiently large $r$ we  thus deduce from~\eqref{7e} that 
if $z$ satisfies $r/K<|z|\leq Kr$ and~\eqref{7f}, then
$zG'(z)/G(z)$ may 
be written in the form 
\begin{equation}\label{7g}
\frac{zG'(z)}{G(z)} = n(r)+S(z)
\quad\text{with}\ |S(z)|\leq \frac12 n(r).
\end{equation}

Suppose now that $\zeta$ is a zero of $G'$ of large modulus.
Choosing $r=|\zeta|$ it follows from~\eqref{7f} and~\eqref{7g} that there exists
$j$ such that $|\zeta-a_j|< Cr/n(r)$. Thus every zero of $G'$ is close to some zero of $G$.
On the other hand, let $a_k$ be a zero of $G$ of large modulus and put $r=|a_k|$. Let
\begin{equation}\label{7g1}
W=\bigcup_{K^{-2}r<|a_j|\leq K^2r} \left\{z\colon |z-a_j|<  \frac{Cr}{n(r)}\right\}
\end{equation}
and let $U$ be the component of $W$ that contains $a_k$.
Since $W$ is the union of at most $4N$ disks,
we see that the diameter of $U$ is at most $8NCr/n(r)$.
For sufficiently large $r$ we deduce that $U\subset \{z\colon r/K<|z|<Kr\}$.
Thus~\eqref{7g} holds for $z\in\partial U$.
Rouch\'e's theorem now yields that $G$ and $G'$ have the same number of zeros in~$U$.
Thus near every zero of $G$ there is also a zero of $G'$.
And the above estimate of the diameter of $U$ yields that for large $R$ 
we can write the zero sequence of $h'$ in the form $(c_n)_{n\geq n_0}$ with $c_n\sim a_n$
and in fact $|c_{n}-a_n|=\OO(|a_n|/n)$.

This proves the result for $m=1$. We note that we do not necessarily have
$|c_n|\leq |c_{n+1}|$. However, we have $|c_n|\leq (1+o(1))|c_{n+1}|$. Noting that we
do not really need that $|a_n|\leq |a_{n+1}|$ in the above proof for the
case $m=1$, but only that $|a_n|\leq (1+o(1))|a_{n+1}|$, the general case now follows by induction.
\end{proof}
The following lemma generalizes Lemma~\ref{asymptheta}.
\begin{lemma} \label{lemma-fM}
Let $G$ be defined by~\eqref{7a0} as in Lemma~{\rm\ref{lemma-ff'}}, with
$g$, $(a_n)$, $K$ and $N$ as there.
Then there exists $\delta>0$ and $R>0$ such that if 
$r\geq R$ and $r/K<|z|\leq Kr$, then
\begin{equation} \label{7k}
\frac{|G(z)|}{M(|z|,G)}
\geq \delta \prod_{K^{-2}r<|a_j|\leq K^2r}\left| 1-\frac{z}{a_j} \right| .
\end{equation}
\end{lemma}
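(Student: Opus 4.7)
\medskip

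\noindent\textbf{Proof plan.} The plan is to fix $s$ with $r/K<s\le Kr$, choose $z_0$ on $|z|=s$ with $|G(z_0)|=M(s,G)$, and compare $|G(z)|$ to $|G(z_0)|$ for an arbitrary $z$ with $|z|=s$ by writing
\begin{equation*}
\frac{G(z)}{G(z_0)}=\frac{g(z)}{g(z_0)}\prod_{j=1}^\infty\frac{1-z/a_j}{1-z_0/a_j}.
\end{equation*}
I~would split the product into a \emph{near} factor over $\{j:K^{-2}r<|a_j|\le K^2r\}$ (which is the product appearing on the right-hand side of~\eqref{7k}) and a \emph{far} factor over the remaining indices. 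The desired inequality will follow once the far factor, together with $g(z)/g(z_0)$, is bounded below by a positive constant $\delta$, and once $\prod_{\text{near}}|1-z_0/a_j|$ is bounded above by a constant; both bounds depending only on $K$ and~$N$.

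The ratio $|g(z)/g(z_0)|$ is handled by the hypothesis that $g$ extends meromorphically to $\Delta\cup\{\infty\}$: this gives $g(z)\sim cz^M$, hence $|g(z)|/|g(z_0)|=1+\OO(1/|z|)$ as $|z|\to\infty$ uniformly in the argument, so this ratio is bounded between two positive constants for $|z|$ large. For the \emph{near} part I~would use $|z_0/a_j|\le K^3$ and the fact that there are at most $4N$ indices $j$ in the relevant range (this was established in the course of the proof of Lemma~\ref{lemma-ff'}), giving the trivial bound $\prod_{\text{near}}|1-z_0/a_j|\le(1+K^3)^{4N}$.

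The main work is the lower bound on the far factor. For each $a_j$ with $|a_j|>K^2r$ we have $|z/a_j|,|z_0/a_j|\le 1/K<1$, so
\begin{equation*}
\bigl|\log|1-z/a_j|-\log|1-z_0/a_j|\bigr|\le\frac{|z-z_0|/|a_j|}{1-1/K}\le\frac{2s}{(1-1/K)|a_j|}.
\end{equation*}
For each $a_j$ with $|a_j|\le K^{-2}r$ we have $|z/a_j|\ge K$, so writing $|1-z/a_j|=|z/a_j|\cdot|1-a_j/z|$ and similarly for $z_0$, the factors $|z/a_j|$ and $|z_0/a_j|$ cancel (since $|z|=|z_0|=s$), and the same mean-value estimate gives
\begin{equation*}
\bigl|\log|1-z/a_j|-\log|1-z_0/a_j|\bigr|\le\frac{2|a_j|/s}{1-1/K}.
\end{equation*}
Grouping the $a_j$'s into annuli $\{K^{n}r<|a_j|\le K^{n+1}r\}$ and using the gap hypothesis that each such annulus contains at most $N$ of the $a_j$, both sums telescope into geometric series in $1/K$, bounded by a constant depending only on $K$ and~$N$ (here I~also use $r/K<s\le Kr$, so $s/r$ and $r/s$ are bounded).

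Exponentiating and multiplying the three bounds (from $g$, the far part, and the near part) gives
\begin{equation*}
\frac{|G(z)|}{M(s,G)}\ge\delta\prod_{\text{near}}|1-z/a_j|
\end{equation*}
for some $\delta>0$ and all sufficiently large $r$, which is exactly~\eqref{7k}. The only slightly delicate step is verifying that the ``far below'' sum is bounded, since the obvious estimate $|1-z/a_j|\approx|z/a_j|$ would diverge individually; the point is that after comparing with $z_0$ the divergent pieces cancel and only the convergent tails $\sum|a_j|/s$ remain.
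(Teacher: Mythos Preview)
Your argument is correct. The route, however, differs from the paper's in a way worth recording. The paper does not choose a maximizing point $z_0$; instead it bounds $M(|z|,G)$ crudely from above by $M(|z|,g)\prod_j(1+|z/a_j|)$, so that
\[
\frac{|G(z)|}{M(|z|,G)}\ge \frac12\prod_{j=1}^\infty\frac{|1-z/a_j|}{1+|z/a_j|},
\]
and then shows that the ``far'' factors $\dfrac{|1-z/a_j|}{1+|z/a_j|}$ are bounded below by terms of the form $\dfrac{1-K^{3-4n}}{1+K^{3-4n}}$ on the annuli $X_n$, whose product over $n\neq 0$ converges. For the ``near'' annulus it keeps the factor $|1-z/a_j|$ and bounds $1/(1+|z/a_j|)\ge (1+K^3)^{-1}$. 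No mean-value estimate is needed, and the divergence for small $|a_j|$ is absorbed by the identity $(x-1)/(x+1)=(1-1/x)/(1+1/x)$.

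Your approach, comparing $G(z)$ to $G(z_0)$ with $|z_0|=|z|$, is a genuine alternative: the cancellation of the $|z/a_j|$ pieces via $|z|=|z_0|$ is exactly what replaces the paper's algebraic identity, and the Lipschitz bound on $\log|1-w|$ in $\{|w|\le 1/K\}$ then reduces everything to the same convergent geometric sums. Your method is slightly more flexible (it would adapt to comparing $G$ at two points not on the same circle, up to an explicit polynomial factor), while the paper's is shorter and gives an explicit value for~$\delta$.
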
 
\begin{proof} 
As in the proof of Lemma~\ref{lemma-ff'} we have $g(z)\sim c z^M$ as $z\to\infty$.
We may choose $R$ such that 
$|z_1^{-M}g(z_1)|\leq 2|z_2^{-M}g(z_2)|$ whenever $|z_1|>R/K$ and
$|z_2|>R/K$. Defining $X_n$ by~\eqref{7b1} as in the proof of Lemma~\ref{lemma-ff'} we then have
\begin{equation}\label{7l}
\frac{|G(z)|}{M(|z|,G)}
\geq \frac12 \prod_{j=1}^\infty \frac{\left| 1-z/a_j\right|}{1+|z/a_j|}
=\frac12 \prod_{n=-\infty}^\infty \prod_{a_j\in X_n} \frac{\left| 1-z/a_j\right|}{1+|z/a_j|}.
\end{equation}
Since $X_n$ contains at most $4N$ points we deduce for $n\geq 1$ that
\begin{equation}\label{7m}
\prod_{a_j\in X_n} \frac{\left| 1-z/a_j\right|}{1+|z/a_j|}
\geq \prod_{a_j\in X_n} \frac{1-Kr/|a_j|}{1+Kr/|a_j|}
\geq \left( \frac{1-K^{3-4n}}{1+K^{3-4n}}\right)^{4N}
\end{equation}
and, using also that $(x-1)/(1+x)=(1-1/x)/(1+1/x)$,
\begin{equation}\label{7n}
\begin{aligned}
\prod_{a_j\in X_{-n}} \frac{\left| 1-z/a_j\right|}{1+|z/a_j|}
&\geq 
\prod_{a_j\in X_{-n}} \frac{|z/a_j| -1}{1+|z/a_j|}
= \prod_{a_j\in X_{-n}} \frac{1-|a_j/z|}{1+|a_j/z|}
\geq \left( \frac{1-K^{3-4n}}{1+K^{3-4n}}\right)^{\! 4N} .
\end{aligned}
\end{equation}
We also have 
\begin{equation}\label{7o}
\prod_{K^{-2}r<|a_j|\leq K^2r}
\frac{1}{1+|z/a_j|}\geq \left(\frac{1}{1+K^3}\right)^{\! 4N}.
\end{equation}
Combining the last four estimates and taking
\begin{equation}\label{7p}
\delta:= \frac12  \left(\frac{1}{1+K^3}\right)^{4N} 
\prod_{n=1}^\infty \left( \frac{1-K^{3-4n}}{1+K^{3-4n}}\right)^{8N}.
\end{equation}
we obtain the conclusion.
\end{proof} 

\begin{proof}[Proof of Theorem~\ref{zeros} in the case of multiple roots]
Let $\kappa$ the maximal degree of the polynomials $Q_j$ in~\eqref{asymp-coeff}.
Instead of~\eqref{asymp-coeff1} and~\eqref{asymp-coeff2} we now obtain
\begin{equation} \label{asymp-coeff1a}
\begin{aligned} 
\alpha_n\rho^{-N n^2}
&=\sum_{\{j\colon |\lambda_j|=\mu\}} \gamma_j n^\kappa \lambda_j^n +\OO\!\left(n^{k-1}\mu^n\right)
\\ &
=\sum_{\{j\colon |\lambda_j|=\mu\}} \gamma_j n^\kappa \omega_j^n \lambda^n+\OO\!\left(n^{k-1}\mu^n\right),
\end{aligned} 
\end{equation}
where $\gamma_j$ is the leading coefficient of $Q_j$ if $\deg Q_j=\kappa$ and $\gamma_j=0$ otherwise,
and $\lambda=\lambda_1$ and $\omega_j=\lambda_j/\lambda$ as before.
With $M_0=LM$ and $N_0=LN$ we again find that there exists
$(\eta_0,\dots,\eta_{M_0-1})\in\C^{M_0}\setminus \{(0,\dots,0)\}$ such that~\eqref{5a} holds if
$n\equiv r \pmod {M_0}$.
The idea is now to compare $f$ not with the function $F$ given by~\eqref{defF} and~\eqref{F=prod},
but with $G:=F^{(\kappa)}$.  

Instead of~\eqref{defR} and~\eqref{u_n=O} we now have
\begin{equation} \label{defRa}
R(z):=f(z)-G(z)=f(z)-F^{(\kappa)}(z)= \sum_{n=-\infty}^\infty u_n  z^n
\end{equation}
where
\begin{equation} \label{u_n=Oa}
u_n=\OO\!\left( |\rho|^{N n^2} n^{\kappa-1}\mu^n\right)
\end{equation}
as $n\to \infty$.
Thus $\log M(r,R)\leq \log M(\mu r,\theta_{\kappa-1}(\cdot,|\rho|^N))+\OO(1)$.
Lemma~\ref{la10} yields together with~\eqref{6e2} that
\begin{equation} \label{8ba}
\begin{aligned}
\log M(r, R)
&\leq \frac{(\log \mu r)^2}{-4\log\!\left(|\rho|^N\right)}+(\kappa-1)\log\log r+\OO(1)
\\ &
=\frac{\sigma_k}{-2\log|q|} (\log \mu r)^2+(\kappa-1)\log\log r+\OO(1).
\end{aligned}
\end{equation}
On the other hand, the growth of $f$ is given by Theorem~\ref{growth}.
We conclude that 
\begin{equation} \label{7q}
M(r,R)=\OO\!\left(\frac{M(r,f)}{\log r}\right)
\end{equation}
and hence
\begin{equation} \label{7r}
M(r,G)\sim M(r,f)
\end{equation}
as $r\to\infty$.

As noted earlier, \eqref{F=prod} and Jacobi's  triple product identity~\eqref{jacobi} yield that
the zeros of $F$ form $M_0$ geometric progressions.
Lemma~\ref{lemma-ff'} implies that the zeros of $G=F^{(\kappa)}$ in $\{z\colon |z|>1\}$
are asymptotic to these geometric progressions. We may write these zeros as a sequence $(a_n)$ 
satisfying the hypotheses of Lemmas~\ref{lemma-ff'}
and~\ref{lemma-fM} so that $G$ has the form~\eqref{7a0}.

By the definition of $R$ we have
\begin{equation} \label{7r1}
\frac{f(z)}{G(z)}=1+ \frac{R(z)}{G(z)}.
\end{equation}
Lemma~\ref{lemma-fM}, together with~\eqref{7q} and~\eqref{7r},
yields that there exists a constant $H>1$ such that if $r$ is sufficiently large and $r/K<|z|\leq Kr$, then
\begin{equation} \label{7r2}
\begin{aligned}
\left| \frac{R(z)}{G(z)}\right|
& 
\leq \frac{M(|z|,R) }{\delta M(|z|,G)} 
\prod_{K^{-2}r<|a_j|\leq K^2r}\left| 1-\frac{z}{a_j} \right|^{-1}
\\ &
\leq
\frac{H}{\log|z|}\prod_{K^{-2}r<|a_j|\leq K^2r}\left| \frac{a_j}{z-a_j} \right|.
\end{aligned}
\end{equation}
Put $C:=2HK^2$ and $\alpha:=1/(4N)$. Let $z$ be such that $r/K<|z|\leq Kr$ and
\begin{equation}\label{7r3}
|z-a_j|\geq  \frac{Cr}{(\log r)^\alpha}
\quad \text{for all}\  j \ \text{with} \ 
\frac{r}{K^2}<|a_j|\leq K^2r.
\end{equation}
For large $r$ we then have
\begin{equation}\label{7f2}
\begin{aligned}
\prod_{K^{-2}r<|a_j|\leq K^2r}\left| \frac{a_j}{z-a_j} \right|
&\leq \prod_{K^{-2}r<|a_j|\leq K^2r} \frac{K^2r(\log r)^\alpha}{Cr}
\leq\left( \frac{(\log r)^\alpha}{2H}\right)^{4N}
\\ &
=\frac{\log r}{(2H)^{4N}}
\leq \frac{\log(K|z|)}{(2H)^{4N}}
\leq \frac{\log|z|}{2H}.
\end{aligned}
\end{equation}
Together with~\eqref{7r2} this yields that
\begin{equation} \label{7s}
\left| \frac{R(z)}{G(z)}\right|
\leq \frac12 
\end{equation}
if $z$ satisfies $r/K<|z|\leq Kr$ and~\eqref{7r3}, provided $r$ is sufficiently large.

We can now deduce from~\eqref{7r1} and Rouch\'e's theorem that 
if  $U$ is a component of the union of the disks $\{z\colon |z-a_j|< C r/(\log r)^\alpha\}$ which is
contained in the annulus $\{z\colon r/K<|z|<K\}$, then $f$ and $G$ have the same number of zeros in $U$.
Similarly as in the proof of Lemma~\ref{lemma-ff'} we see that the diameter
of $U$ is at most $8NCr/(\log r)^\alpha$. Moreover, all zeros of $f$ are in such components.
This implies that the zeros of $f$ are asymptotic to those of $G$, and thus asymptotic to
finitely many geometric progressions.
\end{proof}
\section{Equations of order two}\label{order2}
We discuss equation~\eqref{101}
in the special case that $m=2$,  $\deg a_j=j$ for $j\in\{0,2\}$ and $\deg a_1\leq 1$.
We shall see that in this case the hypothesis
on the roots of the characteristic polynomial that was made in Theorem~\ref{zeros} is sharp.

It is no loss of generality to assume that $a_0(z)\equiv 1$. We may also assume
that the leading coefficient of $a_2$ is equal to $q^2$,
since this can be achieved by replacing $f(z)$ by $f(cz)$ for a suitable constant~$c$.
(We normalize this coefficient to $q^2$ and not to~$1$ because this simplifies some of the formulas
below and, more importantly, agrees with the notation in~\cite{Bergweiler2003}.)
We are thus considering the equation
\begin{equation}\label{101a}
f(z)+(a_{1,1}z+a_{1,0})f(qz)+(q^2 z^2+a_{2,1}z+ a_{2,0})f(q^2z)=b(z)
\end{equation}
with a polynomial~$b$.

For the equation~\eqref{101a} we have $K=1$, $j_0=0$, $j_1=2$, $M=N=2$ and $\sigma_1=1$.
The corresponding segment of $\partial P$ has $(0,0)$ and~$(2,2)$ as its endpoints.
If $a_{1,1}\neq 0$, it also contains the point $(1,d(1))=(1,1)$.
The Newton-Puiseux diagram corresponding to this equation
is shown in Figure~\ref{np-diagram3}. The points $(j,d(j))$ are marked,
assuming $a_{1,1}\neq 0$ so that $d(1)=1$.
\begin{figure}[!htb]
\centering
\begin{tikzpicture}[scale=1.2,>=latex](-0.1,-0.1)(5.1,3.1)
\filldraw[gray!10] (0,0) -- (2,2) -- (3,2) -- (3,-0.7) -- (0,-0.7);
\draw[thick,-] (0,0) -- (2,2) ;
\draw[->] (-0.3,0) -- (3.0,0);
\draw[->] (0,-0.7) -- (0,2.3);
\foreach \x in {1,...,2}
   {
    \draw (\x,-2pt) -- (\x,2pt);
    \node[below] at (\x,-2pt) {$\x$};
   }
\foreach \y in {1,...,2}
   {
    \draw (-2pt,\y) -- (2pt,\y);
    \node[left] at (-2pt,\y) {$\y$};
   }
\filldraw[black] (0,0) circle (0.05);
\filldraw[black] (1,1) circle (0.05);
\filldraw[black] (2,2) circle (0.05);
\end{tikzpicture}
\caption{The Newton-Puiseux diagram of equation~\eqref{101a}.}
\label{np-diagram3}
\end{figure}
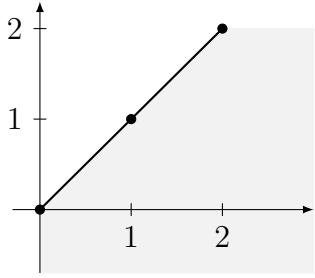

In order to be consistent with the terminology of~\cite[Example~1 and Theorem~3]{Bergweiler2003}
we write $a_{1,1}$ in the form 
\begin{equation} \label{e1a}
a_{1,1}=-2\rho^2\gamma.
\end{equation}
Here $\rho$ is chosen according to~\eqref{defrho}. Since $M=2$ this means that $\rho^4=q$.
We also have $a_{2,2}=q^2=\rho^8$.
For the coefficients $c_{j,i}$ defined by~\eqref{defc} we find 
that $c_{1,1}=a_{1,1}\rho^{-2}=-2\gamma$ and $c_{2,2}=a_{2,2}\rho^{-8}=1$.
The characteristic polynomial defined by~\eqref{p_k} thus takes the form
\begin{equation} \label{e1}
P_1(z)=  z^2-2\gamma z+1.
\end{equation}
Let $\lambda_{1,2}:=\gamma\pm\sqrt{\gamma^2-1}$ be the roots of $P_1$. 

We distinguish two cases, depending on whether these roots are distinct or not.

\begin{case1}
Let $\gamma\neq \pm 1$ so that $\lambda_1\neq\lambda_2$.
Theorem~\ref{growth} yields that the growth of a transcendental solution $f$ is given by
\begin{equation} \label{103c}
\log M(r,f)=  \frac{1}{-2\log |q|} \left(\log\left|\lambda_j\right|r\right)^2 +\OO(1)
\end{equation}
for some $j\in\{1,2\}$.

The hypotheses of Theorem~\ref{zeros} are satisfied if $|\lambda_1|\neq |\lambda_2|$
or if $|\lambda_1|=|\lambda_2|$ and $\lambda_2/\lambda_1$ is a root of unity.
The only case where they are not satisfied is when
$\lambda_1/\lambda_2=e^{2\pi i\eta}$ for some $\eta\in\R\setminus\Q$.
Since $\lambda_1\lambda_2=1$ this yields that $\lambda_{1,2}=e^{\pm i\eta\pi}$ and hence
$\gamma=(\lambda_1+\lambda_2)/2=\cos(\eta\pi)$.
We conclude that the zeros are asymptotic to finitely many geometric progressions if 
$a_{1,1}=-2\rho^2\gamma=\pm 2\sqrt{q}\gamma$ is not of the form
\begin{equation} \label{101b}
a_{1,1}=\pm 2\sqrt{q}\cos(\eta \pi), 
\quad\text{with}\ \eta\in\R\setminus\Q.
\end{equation}

The argument also shows that 
if  $\gamma\notin [-1,1]$ or, equivalently, if $a_{1,1}/\sqrt{q}\notin [-2,2]$, then
$|\lambda_1|\neq |\lambda_2|$. Thus $L=1$ in Theorem~\ref{zeros}. Since $M=2$ this yields
that the zeros are asymptotic to at most two geometric progressions in this case.

On the other hand, 
it was shown in~\cite[Example~1 and Theorem~3]{Bergweiler2003} that 
the zeros need not be asymptotic to finitely many geometric progressions if 
$\gamma=\pm \cos(\eta\pi)$ with $\eta\in\R\setminus\Q$ so that $a_{1,1}$ has the form~\eqref{101b}.
In fact, \cite[Example~1]{Bergweiler2003} says that
if $a_{1,0}=a_{2,0}=a_{2,1}=0$ so that, assuming $\gamma=\cos(\eta\pi)$, the equation takes the form
\begin{equation} \label{e3}
f(z)-2\gamma \rho^2 z f(qz)+ q^2 z^2 f(q^2 z) = b(z),
\end{equation}
and if $0<q<1$ and $c_1,c_2\in\C$, then for 
\begin{equation} \label{e3a}
b(z):= \left( -(c_1+c_2)\gamma \rho^2
+i(c_1-c_2)\rho^2\sqrt{1-\gamma^2}\right)z+c_1+c_2
\end{equation}
the series~\eqref{105} defines a solution $f$ of~\eqref{e3} for
\begin{equation} \label{e4}
\alpha_n=c_1 \lambda_1^n \rho^{2n^2}+c_2 \lambda_2^n \rho^{2n^2}
=c_1 e^{i\eta n} \rho^{2n^2}+c_2 e^{-i\eta n} \rho^{2n^2}.
\end{equation}
It was shown in~\cite[Theorem~3]{Bergweiler2003}  that if $c_1,c_2\neq 0$ and $|c_1|\neq |c_2|$,
then the arguments of the zeros of $f$ are dense in
some subinterval of $[-\pi,\pi]$, but not dense in $[-\pi,\pi]$.
In particular, the zeros are not asymptotic to a finite number of geometric progressions.

We conclude that for the equation~\eqref{101a} the hypothesis posed in Theorem~\ref{zeros} on the 
zeros of $P_k$ is not only sufficient but also necessary in order to conclude that the 
zeros of every entire solution $f$ are asymptotic to finitely many geometric progressions.
\end{case1}

\begin{case2}
Let $\gamma= \pm 1$ so that $\lambda_1=\lambda_2=\pm 1$.
This corresponds to the case $a_{1,1}=\mp 2\rho^2$ in~\eqref{101a}.

Recall that $\rho^4=q$. Choosing $p=\rho^2$ we have $p^2=q$.
By~\eqref{equationtheta} we have $\theta(z,p)=pz\theta(p^2z,p)$
and thus $\theta(p^4z,p)=\theta(p^2z,p)/(p^3z)$.
Differentiating these equations with respect to $z$ and 
eliminating $\theta(p^2z,p)$ from the resulting equations, we obtain
\begin{equation}\label{5f}
\theta'(z,p)-2p^3 z \theta'(p^2 z,p) +p^8 z^2 \theta'(p^4z,p),
\end{equation}
where $\theta'$ denotes the derivative with respect to $z$.
Hence $g(z):=\theta'(z/p^2,p)$ satisfies $g(z)-2p z g(p^2 z) +p^4 z^2 g(p^4z)=0$.
Since $p=\rho^2$ and $p^2=q$ this is equivalent to
\begin{equation}\label{5g}
g(z)-2\rho^2 z g(q z) +q^2 z^2 g(q^2 z)=0.
\end{equation}
This is equation~\eqref{e3} with $\gamma=1$ and $b(z)\equiv 0$.

Let $f$ be the entire function consisting of the non-negative powers in the 
Taylor series of $g$.
It follows from~\eqref{5g} and the definition of $f$ that
\begin{equation}\label{5i}
b(z):=f(z)-2\rho^2 z f(q z) +q^2 z^2 f(q^2 z)
\end{equation}
is a polynomial of degree at most~$1$.
For this polynomial $b$ the function $f$ thus satisfies~\eqref{e3} with $\gamma=1$.

Instead of defining $f$ via $\theta'$ and $g$ we could have defined $f$ also directly
via its Taylor series
\begin{equation}\label{5h}
\begin{aligned}
f(z)
&=\sum_{n=0}^\infty p^{(n+1)^2} (n+1) \left(\frac{z}{p^2}\right)^n
=\sum_{n=0}^\infty p^{n^2+1} (n+1) z^n
=p\sum_{n=0}^\infty \rho^{2n^2} (n+1) z^n.
\end{aligned}
\end{equation}
It can then be checked directly that $f$ satisfies~\eqref{e3} with $\gamma=1$
and $b(z)\equiv p$.

We show that the zeros of $f$ are asymptotic to a geometric series, but that the error
term is weaker than in~\eqref{107a}. To this end we put
\begin{equation}\label{5l}
F(z):=\sum_{n=0}^\infty p^{n^2} \left(\frac{z}{p^2}\right)^n
=\frac{1}{p} \sum_{n=0}^\infty   p^{(n-1)^2} z^{n}
\end{equation}
so that $F'(z)=f(z)/p^2$.  We note that the function $F$ has zeros $\xi_n$ satisfying 
\begin{equation}\label{5l1}
\xi_n=p^{-2n-1}\left(1+\OO\!\left(|p|^n\right)\right)
\end{equation}
as $n\to\infty$.
This follows from Lemma~\ref{asymptheta}
since $F$ consists of the non-negative powers of $\theta(z/p^2,p)$ and since by
Jacobi's triple product identity the zeros of the latter function are precisely the
points $p^{-2n-1}$ with $n\in\Z$. Alternatively, \eqref{5l1}
follows from~\cite[Theorem~2]{Bergweiler2003} or Theorem~\ref{zeros}.
Writing
\begin{equation}\label{5m}
F(z)=\prod_{j=1}^\infty \left(1-\frac{z}{\xi_j}\right)
\end{equation}
we have 
\begin{equation}\label{5n}
\frac{F'(z)}{F(z)}=\sum_{j=1}^\infty \frac{1}{z-\xi_j}.
\end{equation}
For large $|z|$ we choose $n\in\N$ such that $|p|^{-2n}\leq |z|< |p|^{-2n-2}$ and write
\begin{equation}\label{5o}
\frac{F'(z)}{F(z)}
= \frac{1}{z-\xi_n} +\sum_{j=1}^{n-1} \frac{1}{z-\xi_j} + \sum_{j=n+1}^\infty \frac{1}{z-\xi_j}
=: \frac{1}{z-\xi_n} +S_1+S_2.
\end{equation}
By~\eqref{5l1} we have  $|\xi_n|\geq |p|^{-2n-1/2}$ for large $n$ and thus
\begin{equation}\label{5q}
\begin{aligned}
|S_2|
&\leq \sum_{j=n+1}^\infty \frac{1}{|\xi_j|-|z|}
\leq \sum_{j=n+1}^\infty \frac{1}{|p|^{-2j-1/2}-|p|^{-2n-2}}
\\ &
=\sum_{k=1}^\infty \frac{|p|^{2n+2}}{|p|^{-2k+3/2}-1}
=\OO\!\left(|p|^{2n+2}\right)
=\OO\!\left(\frac{1}{|z|}\right)
\end{aligned}
\end{equation}
as $z\to\infty$.
By~\eqref{5l1} there also exists $j_0\in\N$ such that  $|\xi_j|\leq |p|^{-2j-3/2}$ for $j\geq j_0$. Thus
\begin{equation}\label{5r}
\begin{aligned}
|zS_1-(n-1)|
&=\left|\sum_{j=1}^{n-1} \frac{\xi_j}{z-\xi_j}\right|
=\left|\sum_{j=j_0}^{n-1} \frac{\xi_j}{z-\xi_j}\right|+\OO(1)
\\ &
\leq\sum_{j=j_0}^{n-1} \frac{|p|^{-2j-3/2}}{|p|^{-2n}-|p|^{-2j-3/2}}+\OO(1)
\\ &
=\sum_{k=1}^{n-1} \frac{2|p|^{2k-3/2}}{1-|p|^{2k-3/2}}+\OO(1)
=\OO(1)
\end{aligned}
\end{equation}
as $z\to\infty$. Thus
\begin{equation}\label{5s}
\frac{F'(z)}{F(z)}
= \frac{1}{z-\xi_n} + \frac{n}{z} +\OO\!\left(\frac{1}{|z|}\right)
\end{equation}
as $z\to\infty$, with $n$ defined by 
$|p|^{-2n}\leq |z|< |p|^{-2n-2}$.
Rouch\'e's theorem implies that for large $n$ the functions $F'/F$ and hence $f=F'$ 
have exactly one zero $z_n$ satisfying $|p|^{-2n}\leq |z_n|< |p|^{-2n-2}$.
This zero $z_n$ satisfies 
\begin{equation}\label{5s1}
\frac{1}{z_n-\xi_n} + \frac{n}{z_n} =\OO\!\left(\frac{1}{|z_n|}\right)
\end{equation}
 as $n\to\infty$.
Together with~\eqref{5l1} we deduce that
\begin{equation}\label{5t}
z_n =\left(1-\frac{1}{n}+o\!\left(\frac{1}{n}\right)\right) \xi_n
=\left(1-\frac{1}{n}+o\!\left(\frac{1}{n}\right)\right) p^{-2n-1}.
\end{equation}
We find that the $z_n$ are asymptotic to a geometric series, but not with the error term given
by~\eqref{107a}.

We note that the general solution of~\eqref{5g} is given by 
\begin{equation}\label{5j}
g(z)=C_1 \theta\!\left(\frac{z}{p^2},p\right)+C_2 \theta'\!\left(\frac{z}{p^2},p\right)
\end{equation}
with constants $C_1$ and $C_2$. This implies that if $c_1,c_2\in \C$, then
\begin{equation}\label{5k}
f(z)=\sum_{n=0}^\infty (c_1 n+c_2)  \rho^{2n^2} z^n
\end{equation}
solves \eqref{e3} for $\gamma=1$ and some polynomial $b$ of degree at most~$1$.
Theorem~\ref{growth} says that this function satisfies
\begin{equation} \label{103b}
\log M(r,f)=  \frac{1}{-2\log |q|} (\log r)^2 +\log\log r +\OO(1)
\end{equation}
if $c_1\neq 0$. If $c_1=0$ but $c_2\neq 0$, then it satisfies~\eqref{103c}, with $|\lambda_j|=1$.
\end{case2}

\noindent Mathematisches Seminar\\
Christian-Albrechts-Universit\"at zu Kiel\\
Ludewig-Meyn-Str.\ 4\\
24098 Kiel, Germany

\medskip
\noindent
E-mail: {\tt bergweiler@math.uni-kiel.de}
\end{document}